\theoremstyle{plain}
\newtheorem{theo}{Theorem}
\newtheorem{lemm}{Lemma}[section]
\newtheorem{prop}[lemm]{Proposition}
\newtheorem{coro}[lemm]{Corollary}
\newtheorem{conj}{Conjecture}
\theoremstyle{definition}
\theoremstyle{remark}
\newtheorem{rema}{Remark}[section]
\numberwithin{equation}{section}
\newcommand{\R}{\mathbb{R}}
\newcommand{\N}{\mathbb{N}}
\newcommand{\ke}{\text{ker }}
\newcommand{\mc}{\mathcal}
\newcommand{\rr}{\mathbb{R}}
\newcommand{\nn}{\mathbb{N}}
\newcommand{\la}{\lambda}
\newcommand{\eps}{\epsilon}
\newcommand{\x}{\times}
\newcommand{\til}{\widetilde}
\newcommand{\cjd}{\rangle}
\newcommand{\cjg}{\langle}
\newcommand{\demi}{\tfrac{1}{2}}
\newcommand{\be}{\begin{equation}}
\newcommand{\ee}{\end{equation}}
\title[The marked length spectrum of Anosov manifolds]{The marked length spectrum of Anosov manifolds}
\author{Colin Guillarmou}
\address{Laboratoire de Math\'ematiques d'Orsay, Univ. Paris-Sud, CNRS, Universit\'e Paris-Saclay, 91405 Orsay, France}
\email{colin.guillarmou@math.u-psud.fr}
\author{Thibault Lefeuvre}
\address{Laboratoire de Math\'ematiques d'Orsay, Univ. Paris-Sud, CNRS, Universit\'e Paris-Saclay, 91405 Orsay, France}
\email{thibault.lefeuvre@u-psud.fr}
\begin{document}

\begin{abstract}
In all dimensions, we prove that the marked length spectrum of a Riemannian manifold $(M,g)$ with Anosov geodesic flow and non-positive curvature locally determines the metric in the sense that two close enough metrics with the same marked length spectrum are isometric. In addition, we provide a completely new stability estimate quantifying how the marked length spectrum controls the distance between the metrics. In dimension $2$ we obtain similar results for general metrics with Anosov geodesic flows.  
We also solve locally a rigidity conjecture of Croke relating volume and marked length spectrum for the same category of metrics. Finally, by a compactness argument, we show that the set of negatively curved metrics (up to isometry) with the same marked length spectrum and with curvature in a bounded set of $C^\infty$ is finite.
\end{abstract}

\maketitle

\section{Introduction}

Let $(M,g)$ be a smooth closed Riemannian manifold. If the metric $g$ admits an Anosov geodesic flow, the set of lengths of closed geodesics is discrete and is called the \emph{length spectrum of $g$}. 
It is an old problem in Riemannian geometry to understand if the length spectrum determines the metric $g$ up to isometry. Vigneras  \cite{mfv} found counterexamples in constant negative curvature.
On the other hand we know that the closed geodesics are parametrised by the set $\mc{C}$ of free-homotopy classes, or equivalently the set of conjugacy classes of the fundamental group $\pi_1(M)$. 
Indeed, for each $c\in \mc{C}$, there is a unique closed geodesic $\gamma_c$ of $g$ in the class $c$.  Particular examples of manifolds with Anosov geodesic flow are negatively curved compact manifolds.
We can thus define a map, called the \emph{marked length spectrum} 
\begin{equation}
\label{defLg}
L_g: \mc{C}\to \rr^+, \quad L_g(c):=\ell_g(\gamma_c)
\end{equation}
where, if $\gamma$ is a $C^1$-curve, $\ell_g(\gamma)$ denotes its length with respect to $g$.

We recall the following long-standing conjecture stated in Burns-Katok \cite{bk} (and probably considered even before):

\begin{conj}{\cite[Problem 3.1]{bk}}
\label{conj1}
If $(M,g)$ and $(M,g_0)$ are two closed manifolds with negative sectional curvature and same marked length spectrum, i.e $L_g=L_{g_0}$, then they are isometric, i.e. there exists a smooth diffeomorphism $\phi : M \rightarrow M$ such that $\phi^*g = g_0$.
\end{conj}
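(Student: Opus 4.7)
The plan is to go from the length-spectrum equality to a smooth conjugacy of geodesic flows on the unit tangent bundles, and from there to an isometry of the base metrics. Throughout, let $SM$ and $S_{g_0}M$ denote the unit tangent bundles and $\varphi_t$, $\varphi_t^0$ the corresponding geodesic flows; both are topologically mixing Anosov flows by Anosov's theorem applied to negatively curved metrics, and their periodic orbits are parametrised by $\mc{C}$ with periods $L_g$ and $L_{g_0}$.

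\textbf{Step 1 (H\"older flow conjugacy).} The first step is to promote the period equality $L_g = L_{g_0}$ into a H\"older orbit-preserving conjugacy $\Phi \colon SM \to S_{g_0}M$ with $\Phi \circ \varphi_t = \varphi_t^0 \circ \Phi$, sending strong stable and unstable leaves of $\varphi_t$ to those of $\varphi_t^0$. This is a standard application of Livsic-style arguments: a H\"older function with vanishing integrals over every closed orbit is a coboundary, and specialising to the ``time-change'' potential relating the two flows produces $\Phi$.

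\textbf{Step 2 ($C^\infty$-smoothness of $\Phi$).} The second step is to upgrade the regularity of $\Phi$ by exploiting that the two hyperbolic splittings are smooth and that $\Phi$ matches them. I would follow the de la Llave--Marco--Moriy\'on scheme: a bootstrap along each strong stable leaf gives smoothness of $\Phi$ along the leaf; the symmetric argument gives smoothness along unstable leaves; and a Journ\'e-type lemma combines the two to yield $\Phi \in C^\infty(SM, S_{g_0}M)$.

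\textbf{Step 3 (smooth conjugacy implies isometry).} The final step is to show that the smooth $\Phi$ must arise from a diffeomorphism $\phi \colon M \to M$ with $\phi^* g = g_0$, i.e.\ that $\Phi$ preserves the fibration $SM \to M$ and acts fibrewise as the differential of $\phi$. The natural way in is through the horospherical foliations, whose leaves are integral manifolds of the stable/unstable bundles together with the flow direction. In negative curvature these are smooth and geometrically tied to the base metric, for their second fundamental forms encode the sectional curvatures along horospheres; tracking this structure through $\Phi$ ought to force $\Phi$ to descend to $M$ as an isometry. In dimension two this last step was effected by Otal and Croke using the action on the boundary at infinity, and any proof in higher dimension must contain a substitute for that argument.

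\textbf{Main obstacle.} The genuinely hard part is Step 3 in dimension $n = \dim M \geq 3$. Even granting a smooth $\Phi$, there is no general mechanism forcing it to preserve the vertical foliation of the unit tangent bundle; the local rigidity and finiteness results obtained elsewhere in this paper control only the germ of the isospectral class near a fixed metric and therefore do not, by themselves, bridge the gap between flow conjugacy and isometry. A plausible line of attack is to combine the smoothness of $\Phi$ with higher-order curvature rigidity of the horospherical foliation, or to substitute a geometric-structures/boundary-at-infinity argument for the Otal--Croke circle. Producing such an argument in arbitrary dimension is, in my judgement, the missing ingredient separating the results of this paper from the full conjecture.
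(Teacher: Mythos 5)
This statement is Conjecture~\ref{conj1}, the Burns--Katok marked length spectrum conjecture; the paper does not prove it and explicitly presents it as a long-standing open problem. There is therefore no proof in the paper to compare yours against: the paper's actual contribution is the \emph{local} version (Theorem~\ref{th1}, via the operator $\Pi_2$, the positive Livsic theorem and an interpolation bootstrap), which only shows that two sufficiently $C^N$-close metrics with the same marked length spectrum are isometric. Your proposal should be read as an attempted proof of the full conjecture, and as such it contains genuine gaps --- one of which you candidly flag yourself.

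Concretely: Step 1 is essentially sound but incomplete as stated --- Livsic alone does not produce the conjugacy; you first need the Gromov/Morse-lemma fact that the geodesic flows of any two negatively curved metrics on the same closed manifold are H\"older orbit equivalent by a map homotopic to the identity, and only then does $L_g=L_{g_0}$ plus Livsic let you reparametrise the orbit equivalence into a time-preserving conjugacy. Step 2 is where the argument breaks down: upgrading a H\"older conjugacy of Anosov flows to a $C^\infty$ one from matching periodic data is itself a major open problem in dimension $\dim SM\geq 4$ (i.e.\ $\dim M\geq 3$); the de la Llave--Marco--Moriy\'on bootstrap and Journ\'e's lemma apply to low-dimensional or pinched/conformal situations and do not cover general negatively curved manifolds. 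Step 3 is a second open problem, as you note: even granting a smooth conjugacy, ``geodesic conjugacy implies isometry'' is unknown in general --- the paper only obtains it locally (as a corollary of Theorem~\ref{th1}), and the Hamenst\"adt/Besson--Courtois--Gallot route settles it only when one metric is locally symmetric. So your outline is a reasonable map of one classical strategy and of where it gets stuck, but it is not a proof, and it is entirely disjoint from the microlocal/positive-Livsic machinery the paper actually uses to obtain its (local) results.
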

Note that if $\phi : M \rightarrow M$ is a diffeomorphism isotopic to the identity, then $L_{\phi^*g_0}=L_{g_0}$.
The analysis of the linearised operator at a given metric $g_0$ is now well-understood, starting from the pionnering work of Guillemin-Kazhdan \cite{GuKa}, and pursued by the works of Croke-Sharafutdinov \cite{cs}, Dairbekov-Sharafutdinov \cite{DaSh} and more recently by
Paternain-Salo-Uhlmann \cite{psu1,psu2} and the first author \cite{gu}. It is known that the linearised operator, the so called \emph{$X$-ray transform}, is injective for non-positively curved manifolds with Anosov geodesic flows in all dimensions, and for all Anosov geodesic flows in dimension $2$. These works imply the deformation rigidity result: there is no $1$-parameter family of such metrics with the same marked length spectrum.

Concerning the non-linear problem (Conjecture \ref{conj1}), there are only very few results:
in dimension $2$ and non-positive curvature, the breakthrough was due to Otal \cite{ot} and Croke \cite{cr1} who solved that problem\footnote{Otal's work was in negative curvature and Croke in non-positive curvature}. It was extended by Croke-Fathi-Feldman \cite{cff} to surfaces when one of the metrics has non positive curvature and the other has no conjugate points.
Katok \cite{Ka} previously had a short proof for metrics in a fixed conformal class, in dimension $2$ but that can be easily extended to higher dimensions. 
Beside the conformal case, for higher dimension the only known rigidity result is that of Hamenst\"adt  \cite{Ha} by applying the famous entropy rigidity work of Besson-Courtois-Gallot \cite{BCG}:  if two negatively curved metrics $(M,g)$ and $(M,g_0)$ have the same marked length spectrum and their Anosov foliation is $C^1$, then ${\rm Vol}_g(M)={\rm Vol}_{g_0}(M)$, and since  $L_g$ determines the topological entropy, the result of \cite{BCG} solves Conjecture \ref{conj1} when $g_0$ is a locally symmetric space. For general metrics the problem is largely open. We notice that Biswas \cite{Bi} recently announced that two Riemannian manifolds with same marked length spectrum are bi-Lipshitz homeomorphic.
We refer to the surveys/lectures of Croke and Wilkinson \cite{cr1,aw} for an overview of the subject.
The main difficulty to obtain a local rigidity result is that the linearised operator takes values on functions on a discrete set and is not a tractable operator to obtain non-linear results. 

The Conjecture \ref{conj1} actually also makes sense for Anosov geodesic flows, without the negative curvature assumption, but it might be more reasonable to conjecture that only finitely many non isometric Anosov metrics have same marked length spectrum.

Our first result is a local rigidity statement that says that the marked length spectrum parametrizes locally the isometry classes of metrics. As far as we know, this is the first (non-linear) progress towards Conjecture \ref{conj1} in dimension $n\geq 3$ for general metrics.
\begin{theo}
\label{th1}
Let $(M,g_0)$ be:
\begin{itemize}
\item either a closed smooth Riemannian surface with Anosov geodesic flow,
\item or a closed smooth Riemannian manifold of dimension $n \geq 3$ with Anosov geodesic flow and non-positive sectional curvature,
\end{itemize}
and let $N>3n/2+8$. There exists $\eps > 0$ such that for any smooth metric $g$ with same marked length spectrum as $g_0$ and such that $\|g-g_0\|_{C^{N}(M)} < \eps$, there exists a  diffeomorphism $\phi : M \rightarrow M$ such that $\phi^*g = g_0$.
\end{theo}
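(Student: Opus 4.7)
The plan is to reduce the nonlinear rigidity problem to a quantitative stability estimate for the X-ray transform $I_2^{g_0}$ on symmetric $2$-tensors and then invoke the known s-injectivity in the two cases at hand (2D Anosov by Paternain--Salo--Uhlmann, and non-positively curved Anosov in all dimensions by Croke--Sharafutdinov and Guillemin--Kazhdan). The central new ingredient is a quantitative version of the Livsic theorem for Anosov flows, which converts the closed-orbit information coming from $L_g=L_{g_0}$ into a Sobolev bound on $g-g_0$ after gauge-fixing.

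The first step is to fix the diffeomorphism gauge. Since $L_{\phi^*g} = L_g$ for any $\phi$ isotopic to the identity, I may pick $\phi$ via an implicit function argument near the identity so that $h := \phi^*g - g_0$ is solenoidal for $g_0$, i.e.\ $\delta_{g_0}^s h = 0$. Structural stability of Anosov flows ensures that $\phi^*g$ is still Anosov and shares the same free-homotopy labelling of closed orbits as $g_0$. For each $c \in \mc{C}$, let $\gamma_c$ denote the unit-speed $g_0$-geodesic. A Taylor expansion of the length functional along the fixed curve $\gamma_c$, combined with the variational characterization of the $\phi^*g$-geodesic in the class $c$, gives
\be
|I_2^{g_0}h(c)| \leq C\,\|h\|_{C^1}^2\, L_{g_0}(c), \qquad \forall c \in \mc{C},
\ee
where $I_2^{g_0}h(c) := \int_0^{L_{g_0}(c)} h(\dot\gamma_c,\dot\gamma_c)\,dt$.

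The next step is to translate this closed-orbit bound into a Sobolev bound on $h$. Let $X$ denote the geodesic vector field on $SM$, let $\pi_2^*$ be the pullback from symmetric $2$-tensors to functions on $SM$, and let $\Pi_2 := \pi_{2*}\,\Pi\,\pi_2^*$, where $\Pi$ is built from the symmetrized resolvent of $X$ at $0$ in the anisotropic Sobolev framework of Faure--Sj\"ostrand and Dyatlov--Zworski. In both settings of the theorem, $\Pi_2$ should be an elliptic pseudodifferential operator of order $-1$ that is s-injective on solenoidal $2$-tensors, giving a stability estimate
\be
\|h\|_{H^s} \leq C\,\|\Pi_2 h\|_{H^{s+1}}
\ee
for solenoidal $h$. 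The bridge from closed-orbit integrals to $\Pi_2 h$ would be provided by a quantitative Livsic theorem: the hypothesis $|I_2^{g_0}h(c)| \leq \epsilon L_{g_0}(c)$ for all $c$ should yield a H\"older $u$ on $SM$ with $\|\pi_2^*h - Xu\|_{L^\infty} \leq C\epsilon^\alpha$ for some $\alpha \in (0,1)$, which in turn controls $\|\Pi_2 h\|$ in a suitable Sobolev norm (using $\Pi_2 \circ D^s = 0$).

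Chaining the previous two estimates gives, schematically, $\|h\|_{H^s} \leq C\,\|h\|_{C^1}^{2\alpha'}$ for some $\alpha'>0$. Using Sobolev interpolation between this low-regularity bound and the assumed $\|h\|_{C^N} \leq \eps$ (the interpolation, together with the derivative loss in Livsic and the Sobolev embedding $H^s \hookrightarrow C^1$, should fix the threshold $N > 3n/2+8$), I would deduce a self-improving inequality of the form $\|h\|_{C^1} \leq C\,\|h\|_{C^1}^{1+\eta}$ with $\eta>0$, forcing $h \equiv 0$ once $\eps$ is small enough. The hardest step, in my view, is the quantitative Livsic theorem: the cocycle produced by Livsic for an Anosov flow is only H\"older a priori, so the positive exponent $\alpha$ has to be explicit and the derivative loss carefully balanced against the $-1$ gain from $\Pi_2$ and the $C^N$ data, in order for the bootstrap to close. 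A secondary technical difficulty is the uniformity of the various stability constants as $g$ ranges over a $C^N$-neighbourhood of $g_0$, so that the fixed operators $\Pi_2^{g_0}$ and the fixed gauge based at $g_0$ can be used throughout.
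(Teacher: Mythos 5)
Your overall architecture (gauge-fixing to a solenoidal perturbation $f=\phi^*g-g_0$, the elliptic normal-type operator $\Pi_2$ with its stability estimate on solenoidal tensors, and an interpolation bootstrap against the $C^N$ bound) matches the paper. The genuine gap is the bridge that you yourself identify as the hardest step: the ``quantitative Livsic theorem'' asserting that $|I_2^{g_0}f(c)|\leq \epsilon L_{g_0}(c)$ for all $c$ produces a H\"older $u$ with $\|\pi_2^*f - Xu\|_{L^\infty}\leq C\epsilon^\alpha$. No such statement is proved or cited, and it is not a routine refinement of the classical Livsic theorem: an approximate Livsic theorem in $L^\infty$ for Anosov flows is a substantially harder result that was not available at the time, and even granting it your bootstrap only closes if the exponent survives as $2\alpha'>1$ after the interpolation losses, which you do not control.

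The paper closes this gap by a different mechanism that avoids any approximate Livsic statement. From $L_g\geq L_{g_0}$ alone, Cauchy--Schwarz applied to the $g$-energy of the $g_0$-geodesics gives the \emph{one-sided} inequality $I_2^{g_0}f(c)\geq 0$ for every $c$ (no Taylor expansion is needed for this direction). The \emph{positive} Livsic theorem of Lopes--Thieullen and Pollicott--Sharp then yields an exact decomposition $\pi_2^*f+Xh=F$ with $F\geq 0$ and $\|F\|_{C^{\beta}}\leq C\|f\|_{C^\alpha}$. The crucial point is that positivity gives $\|F\|_{L^1}=\int_{SM}F\,d\mu=\int_{SM}\pi_2^*f\,d\mu$, and this single average --- not each orbit integral --- is shown to be $O(\|f\|_{C^3}^2)$ by combining the second-order Taylor expansion of the length functional with Parry's equidistribution formula. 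The interpolation $\|F\|_{L^2}\leq\|F\|_{L^1}^{1/2}\|F\|_{L^\infty}^{1/2}$ then produces the superlinear bound $\|f\|_{H^{-1-s}}\lesssim \|f\|^{1-\nu}_{C^3}\|f\|_{C^\alpha}^{(1+\nu)/2}$ that makes the bootstrap close. If you want to keep your two-sided formulation, you would have to supply a proof of the approximate Livsic theorem with a controlled exponent; as written, the argument does not go through. (A minor additional point: your claimed bound $|I_2^{g_0}h(c)|\leq C\|h\|_{C^1}^2L_{g_0}(c)$ needs at least $C^3$ control of $h$, since the $C^2$ regularity of $g\mapsto L_g(c)$ comes from structural stability of the flow in the $C^2$ topology on vector fields.)
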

We actually prove a slightly stronger result in the sense that $g$ can be chosen to be in the H\"older space $C^{N,\alpha}$ with $(N,\alpha)\in \nn\x(0,1)$ satisfying $N+\alpha>3n/2+8$. Note also that $\eps > 0$ is chosen small enough so that the metrics $g$ have Anosov geodesic flow too. 
This result is new even if $\dim(M)=2$, as we make no assumption on the curvature. If 
$\dim(M)>2$ and $g$ is Anosov, the same result holds outside a finite dimensional manifold of metrics, see Remark \ref{kerI2An}.
This implies a general result supporting Conjecture \ref{conj1}: 
\begin{coro}
Let $(M,g_0)$ be an $n$-dimensional compact Riemannian manifold with negative curvature 
and let $N>3n/2+8$. Then there exists $\eps > 0$ such that for any smooth metric $g$ with same marked length spectrum as $g_0$ and such that $\|g-g_0\|_{C^{N}(M)} < \eps$, there exists a  diffeomorphism $\phi : M \rightarrow M$ such that $\phi^*g = g_0$.
\end{coro}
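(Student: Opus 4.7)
The corollary is a direct consequence of Theorem \ref{th1}; the plan is simply to verify that the hypotheses of that theorem are satisfied by any negatively curved closed manifold $(M,g_0)$, in both the surface and higher-dimensional cases.

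First, I would recall the classical theorem of Anosov: a closed Riemannian manifold with strictly negative sectional curvature has an Anosov geodesic flow. This settles the ``Anosov geodesic flow'' hypothesis in both bullets of Theorem \ref{th1}. Second, strict negative curvature trivially implies non-positive sectional curvature, so the extra curvature assumption required in Theorem \ref{th1} when $n\geq 3$ is automatic. Hence $(M,g_0)$ falls into the first bullet of Theorem \ref{th1} when $n=2$ and into the second bullet when $n\geq 3$.

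With the hypotheses verified, I would invoke Theorem \ref{th1} directly: for $N>3n/2+8$ there exists $\eps>0$ such that any smooth metric $g$ with $L_g=L_{g_0}$ and $\|g-g_0\|_{C^N(M)}<\eps$ is isometric to $g_0$ via some diffeomorphism $\phi\colon M\to M$. This gives exactly the conclusion of the corollary. There is no real obstacle here, the only thing worth emphasizing is that the neighborhood threshold $\eps$ is inherited from Theorem \ref{th1}; as noted in the discussion following that theorem, $\eps$ is chosen small enough so that all metrics $g$ in the $C^N$-ball around $g_0$ remain Anosov, a property that is open in the $C^2$-topology and in particular in $C^N$.
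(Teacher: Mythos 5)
Your argument is correct and is exactly the (implicit) one in the paper: negative sectional curvature gives an Anosov geodesic flow by Anosov's theorem and trivially implies non-positive curvature, so both bullets of Theorem \ref{th1} are covered and the corollary follows immediately. The paper states the corollary without further proof for precisely this reason, so there is nothing to add.
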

Since two $C^0$-conjugate Anosov geodesic flows that are close enough have the same marked length spectrum, we also deduce that for $g_0$ fixed as above, each metric $g$ which is close enough to $g_0$ and has geodesic flow conjugate to that of $g_0$ is isometric to $g$.

To prove these results, a natural strategy would be to apply an implicit function theorem. The linearised operator $I_{2}$, called the \emph{$X$-ray transform}, consists in integrating $2$-tensors along closed geodesics of $g_0$ (see Section \ref{Xray}). It is known to be injective under the assumptions of Theorem \ref{th1} 
by \cite{cs,psu1,psu2,gu}, but as mentioned before, the main difficulty to apply this to the non-linear problem is that $I_2$ maps to functions on the discrete set $\mc{C}$ and it seems unlikely that its range is closed. To circumvent this problem, we use some completely new approach from \cite{gu} that replaces the operator $I_2$ by a more tractable Fredholm one, that is constructed using microlocal methods in Faure-Sj\"ostrand \cite{fs} and Dyatlov-Zworski \cite{dz}. This new operator, denoted by $\Pi_2$, plays the same role as the normal operator $I_2^*I_2$ that is strongly used in the context of manifolds with boundary but $\Pi_2$ is not constructed from $I_2$. 
The additional crucial ingredient that allows us to relate the operators $I_2$ and $\Pi_2$ 
is a ``positive Livsic theorem'' due to Pollicott-Sharp \cite{ps} and Lopes-Thieullen \cite{lt}. We manage to obtain a sort of stability estimate for the $X$-ray operator with some loss of derivatives, but that is sufficient for our purpose.
A corollary of this method is a completely new stability estimate for the $X$-ray transform on divergence-free tensors, that quantifies the smallness of a divergence-free symmetric $m$-tensor $f\in C^\alpha(M;S^mT^*M)$ (for $m\in\nn$,$\alpha>0$) in terms of the supremum of its integrals  
$\frac{1}{\ell(\gamma)}\int_{\gamma}f$ over all closed geodesics $\gamma$ of $g_0$, see Theorem \ref{th:stab1}.\\ 

Combining these methods with some ideas developed by Croke-Dairbekov-Sharafutdinov 
\cite{cds} and the second author \cite{tl} in the case with boundary, we are able to prove a new rigidity result which has similarities with the minimal filling volume problem appearing for manifolds with boundary and is a problem asked by Croke in \cite[Question 6.8]{Cr2}. 
\begin{theo}
\label{th1bis}
Let $(M,g_0)$ be as in Theorem \ref{th1} and let $N>\frac{n}{2}+2$. There exists $\eps > 0$ such that for any smooth metric $g$ satisfying $\|g-g_0\|_{C^N} < \eps$, the following holds true: if $L_g(c)\geq L_{g_0}(c)$ for all conjugacy class $c\in\mc{C}$ of $\pi_1(M)$, 
then ${\rm Vol}_{g}(M)\geq {\rm Vol}_{g_0}(M)$. If in addition
${\rm Vol}_{g}(M)={\rm Vol}_{g_0}(M)$, then there exists a diffeomorphism $\phi : M \rightarrow M$ such that $\phi^*g = g_0$.
\end{theo}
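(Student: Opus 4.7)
The plan is to adapt the minimal-filling-volume/boundary-rigidity arguments of Croke-Dairbekov-Sharafutdinov \cite{cds} and Lefeuvre \cite{tl} to the closed Anosov setting, using the positive Livsic theorem of Pollicott-Sharp \cite{ps} and Lopes-Thieullen \cite{lt} as the bridge from the marked length spectrum to the Liouville measure of $g_0$. First, I put $g$ in solenoidal gauge by replacing it with $\phi^*g$ for an appropriate diffeomorphism $\phi$ close to the identity; this affects neither $L_g$ nor $\mathrm{Vol}_g$, so I may assume $h := g - g_0$ satisfies $\delta_{g_0} h = 0$ from the outset.

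Since $g$ is $C^N$-close to the Anosov metric $g_0$, it is itself Anosov, hence has no conjugate points, and the $g$-closed geodesic $\gamma_c^g$ in each class $c \in \mc{C}$ is the $g$-length minimiser in its free homotopy class. Introducing $F_g(v) := \sqrt{g(v,v)}$ as a $C^N$-function on $SM_{g_0}$ and parametrising $\gamma_c^{g_0}$ by $g_0$-arclength, this gives
\begin{equation*}
L_{g_0}(c) \leq L_g(c) = \ell_g(\gamma_c^g) \leq \ell_g(\gamma_c^{g_0}) = \int_{\gamma_c^{g_0}} F_g\, dt,
\end{equation*}
so $\int_{\gamma_c^{g_0}}(F_g - 1)\, dt \geq 0$ for every $c$. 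The positive Livsic theorem then produces a H\"older-continuous function $u$ on $SM_{g_0}$ and a non-negative H\"older-continuous function $H$ with $F_g - 1 = X_{g_0} u + H$, where $X_{g_0}$ is the geodesic vector field of $g_0$. Integrating against the $X_{g_0}$-invariant Liouville measure $\mu_{g_0}$ yields
\begin{equation*}
\int_{SM_{g_0}} F_g\, d\mu_{g_0} \;\geq\; \mu_{g_0}(SM_{g_0}) \;=\; \mathrm{Vol}(S^{n-1})\,\mathrm{Vol}_{g_0}(M).
\end{equation*}

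The main non-linear step is to convert this inequality into a volume comparison. Taylor-expanding $F_g$ on each fibre $S_xM_{g_0}$ and the Jacobian density $\sqrt{\det(g_0^{-1}g)}$ in $h$, one finds that both $\int F_g\, d\mu_{g_0} - \mathrm{Vol}(S^{n-1})\mathrm{Vol}_{g_0}(M)$ and $\mathrm{Vol}_g(M) - \mathrm{Vol}_{g_0}(M)$ have first variation proportional to $\int_M \mathrm{tr}_{g_0} h \, dV_{g_0}$ with the same sign, so at first order the Livsic inequality already implies $\mathrm{Vol}_g \geq \mathrm{Vol}_{g_0}$. Following the closed-manifold analogue of the argument of \cite{cds,tl}, the solenoidal gauge is used to kill the potential-tensor contributions in the quadratic remainder, and the $C^N$-smallness of $h$ to absorb the residual nonlinear defect, yielding $\mathrm{Vol}_g(M) \geq \mathrm{Vol}_{g_0}(M)$.

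For the equality case, $\mathrm{Vol}_g(M) = \mathrm{Vol}_{g_0}(M)$ forces $H \equiv 0$, so that $F_g - 1$ is an $X_{g_0}$-coboundary and its integral over every closed orbit vanishes; hence $\ell_g(\gamma_c^{g_0}) = L_{g_0}(c) = L_g(c)$ for all $c$, so $g$ and $g_0$ have the same marked length spectrum and Theorem \ref{th1} (applicable since $g$ is smooth, after bootstrapping the regularity) produces a diffeomorphism $\phi$ with $\phi^*g = g_0$. The main obstacle is precisely the quadratic step in the volume comparison: naive pointwise Jensen-type comparisons between the fibre-average of $\sqrt{g(v,v)}$ and $\sqrt{\det(g_0^{-1}g)}$ do not have the right sign on their own, and the argument crucially exploits both the solenoidal gauge and the $C^N$-smallness of $h$ to render the nonlinear remainder harmless.
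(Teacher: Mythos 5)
Your setup is fine up to and including the positive Livsic step: the gauge reduction, the inequality $\int_{\gamma_c^{g_0}}(F_g-1)\,dt\geq 0$, and the decomposition $F_g-1=X_{g_0}u+H$ with $H\geq 0$ all match the paper (which works with the $2$-tensor $\pi_2^*f$ and the energy functional via Cauchy--Schwarz rather than with $\sqrt{g(v,v)}$, a choice that matters later because the linear machinery acts on tensors). But the ``main non-linear step'' is where the proof genuinely breaks. The Livsic inequality $\int_{SM}F_g\,d\mu\geq \mu(SM)$ only tells you that the \emph{first-order} quantity $\int_M\mathrm{tr}_{g_0}h\,dV_{g_0}$ is bounded below by $-C\|h\|^2$, and the volume expansion $\mathrm{Vol}_g-\mathrm{Vol}_{g_0}=\tfrac12\int_M\mathrm{tr}_{g_0}h\,dV_{g_0}+O(\|h\|^2)$ has a quadratic remainder with no sign that is \emph{not} killed by the solenoidal gauge (it involves terms like $(\mathrm{tr}_{g_0}h)^2$ and $|h|_{g_0}^2$, which survive $D^*h=0$). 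Smallness of $h$ cannot ``absorb'' a signless error into an inequality whose first-order term may itself vanish; you correctly identify this as the main obstacle but offer no mechanism to overcome it, and there is none along these lines. The same problem invalidates the equality case: $\mathrm{Vol}_g=\mathrm{Vol}_{g_0}$ gives $\int_{SM}H\,d\mu=O(\|h\|^2)$, not $0$, so $H\equiv 0$ does not follow and you cannot conclude $L_g=L_{g_0}$.

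The paper's actual argument is a dichotomy combined with a quantitative elliptic estimate, and the latter is entirely absent from your proposal. Either $\mathrm{Vol}_g\geq\mathrm{Vol}_{g_0}$ and the first assertion holds trivially, or $\mathrm{Vol}_g\leq\mathrm{Vol}_{g_0}$; in the second case the Taylor expansion of the volume yields $\int_{SM}\pi_2^*f\,d\mu\leq \tfrac23\|f\|_{L^2}^2$ (Lemma \ref{lem:geo2}), i.e.\ the $L^1$-norm of the nonnegative Livsic representative $F=\pi_2^*f+Xh$ is \emph{quadratically} small in $f$. This is then fed into the stability estimate $\|f\|_{H^{-1-s}}\leq C(\|\Pi\pi_2^*f\|_{H^{-s}}+|\langle\pi_2^*f,1\rangle|)$ for solenoidal tensors (Lemma \ref{lem:fun2}, built on the ellipticity of $\Pi_2={\pi_2}_*\Pi\pi_2^*$ and solenoidal injectivity of $I_2$), using $\Pi Xh=0$, the interpolation $\|F\|_{L^2}\leq \|F\|_{L^1}^{1/2}\|F\|_{L^\infty}^{1/2}$, and the interpolation between $H^{-1-s}$ and $H^{N_0}$ to produce $\|f\|_{H^{-1-s}}\leq C\|f\|_{H^{-1-s}}^{\gamma}\|f\|_{H^{N_0}}^{\gamma'}$ with $\gamma>1$, which forces $f=0$ for $\delta$ small. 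It is this superlinear bootstrap --- not a sign argument on the volume expansion --- that yields both $\mathrm{Vol}_g\geq\mathrm{Vol}_{g_0}$ and the rigidity in the equality case. Without the operator $\Pi$ and Lemma \ref{lem:fun2} (or some substitute converting quadratic smallness of $\|F\|_{L^1}$ into vanishing of $f$), your proof does not close.
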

Again, in the proof, we actually just need $g\in C^{N,\alpha}$ with $(N,\alpha)\in\nn\x(0,1)$ 
satisfying $N+\alpha>n/2+2$. This result (but without the assumption that $g$ is close to $g_0$) was proved by Croke-Dairbekov \cite{CrDa} for negatively curved surfaces and for metrics in a conformal class in higher dimension (by applying the method of \cite{Ka}). Our result 
is the first general one in dimension $n>2$ and is new even when $n=2$ as we do not assume negative curvature.\\

Next, we get H\"older stability estimates quantifying how close are metrics with close marked length spectrum. In that aim we fix a metric $g_0$ with Anosov geodesic flow and 
define for $g$ close to $g_0$ in some $C^{N}(M)$ norm
\[ \mc{L}(g)\in \ell^\infty(\mc{C}) ,\quad \mc{L}(g)=\frac{L_g}{L_{g_0}}.\]
We are able to show (here and below, $H^s(M)$ is the usual $L^2$-based Sobolev space of order $s\in \rr$ on $M$):
\begin{theo}
\label{th3}
Let $(M,g_0)$ satisfy the assumptions of Theorem \ref{th1} and let 
$N>3n/2+9$. For all $s>0$ small there is a positive $\nu =\mc{O}(s)$ and a constant $C > 0$ such that the following holds:  for all $\delta>0$ small, there exists $\eps>0$ small such that for any $C^N$ metric $g$ satisfying 
$\|g-g_0\|_{C^N}<\eps$, there is a diffeomorphism $\phi$ such that (here ${\bf 1}(c)=1$ for each $c\in \mc{C}$):
\[\|\phi^*g-g_0\|_{H^{-1-s}(M)} \leq C\delta \|\mc{L}(g)-{\bf 1}\|^{(1-\nu)/2}_{\ell^\infty(\mc{C})}+ 
C\|\mc{L}(g)-{\bf 1}\|_{\ell^\infty(\mc{C})} \]
\end{theo}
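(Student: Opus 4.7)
The strategy is to combine a first-order expansion of the marked length spectrum around $g_0$ with the H\"older-type stability estimate for the $X$-ray transform furnished by Theorem \ref{th:stab1}, after first fixing a solenoidal gauge.

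The first step is gauge fixing. By an Ebin-type slice argument, provided $\eps$ is small enough there is a smooth diffeomorphism $\phi$ close to the identity such that $h:=\phi^*g-g_0$ is divergence-free with respect to $g_0$. Since $\mc{L}(\phi^*g)=\mc{L}(g)$, we may replace $g$ by $\phi^*g$ and work with this solenoidal perturbation $h$, whose $C^{N_0}$-norm is still controlled by $\eps$ for an appropriate $N_0<N$.

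Next, parameterising $\gamma_c^{g_0}$ by $g_0$-arc length and Taylor expanding $\sqrt{1+h(\dot\gamma,\dot\gamma)}$, the minimising property $L_g(c)\leq L_g(\gamma_c^{g_0})$ together with its symmetric counterpart (obtained through Anosov structural stability of the flow of $g$, which allows one to compare $\gamma_c^g$ and $\gamma_c^{g_0}$) yields the nonlinear expansion
\[\left\|\mc{L}(g)-\mathbf{1}-\tfrac{I_2 h}{2L_{g_0}}\right\|_{\ell^\infty(\mc{C})} \leq C\|h\|_{C^{2}}^{2}.\]
This converts the problem into a quantitative question about $I_2$ acting on solenoidal $2$-tensors. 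Applying Theorem \ref{th:stab1} to $h$ produces a H\"older stability of the schematic form
\[\|h\|_{H^{-1-s}} \leq C\|h\|_{C^{N_0}}^{\mu}\left\|\tfrac{I_2 h}{L_{g_0}}\right\|_{\ell^\infty(\mc{C})}^{1-\nu},\]
for some $\mu,\nu=\mc{O}(s)$, the exponent $1-\nu$ originating from the Fredholm structure of the operator $\Pi_2$ combined with the positive Livsic bound. Substituting the length expansion and using the subadditivity $(a+b)^{1-\nu}\leq a^{1-\nu}+b^{1-\nu}$ yields
\[\|h\|_{H^{-1-s}} \leq C\|h\|_{C^{N_0}}^{\mu}\|\mc{L}(g)-\mathbf{1}\|_{\ell^\infty}^{1-\nu}+C\|h\|_{C^{N_0}}^{\mu}\|h\|_{C^{2}}^{2(1-\nu)}.\]

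To reach the advertised exponent $(1-\nu)/2$ I would further interpolate on the left-hand side, writing $\|h\|_{H^{-1-s}}\leq \|h\|_{H^{-1-2s}}^{1/2}\|h\|_{H^{N_1}}^{1/2}$, estimating the high-regularity factor crudely by $\eps$ and the low-regularity factor by the previous inequality; this halves the exponent. Choosing $\eps$ small enough in $\delta$ so that the resulting prefactor is at most $\delta$ produces the first advertised term $C\delta\|\mc{L}(g)-\mathbf{1}\|_{\ell^\infty}^{(1-\nu)/2}$. The quadratic residue $\|h\|_{C^2}^{2(1-\nu)}$ is then reabsorbed by using Step~2 once more to re-express $\|h\|_{C^2}^2$ in terms of $\|\mc{L}(g)-\mathbf{1}\|_{\ell^\infty}$ up to a small correction, generating the linear tail $C\|\mc{L}(g)-\mathbf{1}\|_{\ell^\infty(\mc{C})}$. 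The main obstacle is exactly this last exponent-balancing step: extracting the $(1-\nu)/2$ exponent from a $1-\nu$ stability estimate while simultaneously absorbing the quadratic remainder into a linear tail. This requires careful interpolation between negative Sobolev norms and the bound $\|h\|_{C^N}\leq\eps$, which is precisely why $N>3n/2+9$ is imposed (one derivative more than in Theorem \ref{th1}, to absorb the interpolation loss).
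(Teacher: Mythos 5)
Your overall architecture (gauge-fix to a solenoidal perturbation, second-order Taylor expansion of $\mc{L}$ identifying $D\mc{L}_{g_0}$ with $\tfrac12 I_2$, then a H\"older stability estimate for $I_2$ followed by interpolation and absorption) is the same as the paper's. The problem is that you misstate the central quantitative input and then build an incorrect workaround for a difficulty that does not exist. Theorem \ref{th:stab1} does not give an estimate with exponent $1-\nu$ on $\|I_2 h\|_{\ell^\infty}$; it gives exponent $(1-\nu)/2$ directly. That factor of $\tfrac12$ is not produced by ``Fredholm structure'' nor by interpolating the left-hand side: it comes from writing $\pi_2^*h+Xu+\|I_2h\|_{\ell^\infty}=F\geq 0$ via the positive Livsic theorem of Lopes--Thieullen/Pollicott--Sharp, estimating $\|F\|_{L^2}\leq\|F\|_{L^1}^{1/2}\|F\|_{L^\infty}^{1/2}$, and computing $\|F\|_{L^1}=\int_{SM}F\,d\mu$ by Parry's equidistribution formula, which is bounded linearly by $\|I_2h\|_{\ell^\infty}$. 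Had the $1-\nu$ estimate you posit actually been available, no ``halving'' would be needed at all, since $K^{1-\nu}\leq K^{(1-\nu)/2}$ for $K\leq 1$. Moreover the interpolation you propose to do the halving, $\|h\|_{H^{-1-s}}\leq\|h\|_{H^{-1-2s}}^{1/2}\|h\|_{H^{N_1}}^{1/2}$ with $N_1$ large, is not a valid inequality: the Sobolev orders must satisfy $-1-s=\tfrac12(-1-2s)+\tfrac12 N_1$, forcing $N_1=-1$, so no high-regularity factor controlled by $\eps$ appears this way.

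The treatment of the quadratic remainder is also not correct. The Taylor expansion \eqref{tayloronL} is a one-way bound, $\|I_2h\|_{\ell^\infty}\leq 2\|\mc{L}(g)-\mathbf{1}\|_{\ell^\infty}+C\|h\|_{C^3}^2$; you cannot ``use it once more'' to re-express $\|h\|_{C^2}^2$ in terms of $\|\mc{L}(g)-\mathbf{1}\|_{\ell^\infty}$, as that would require a reverse (lower) bound on $I_2h$ which is precisely what is being proved. The correct mechanism, and the one the paper uses, is to interpolate the H\"older norms appearing on the right against the two ends $H^{-1-s}$ and $H^{N_0}$, i.e. $\|h\|_{C^j}\leq C\|h\|_{H^{-1-s}}^{1-\theta_j}\|h\|_{H^{N_0}}^{\theta_j}$, so that the quadratic remainder becomes $\|h\|_{H^{-1-s}}^{\gamma}\|h\|_{C^{N_0}}^{\gamma'}$ with $\gamma>1$, and is then absorbed into the left-hand side using $\|h\|_{C^{N_0}}\leq\delta$ with $\delta$ small; the surviving linear tail $C\|\mc{L}(g)-\mathbf{1}\|_{\ell^\infty}$ comes from the cross term $\|I_2h\|_{\ell^\infty}^{(1-\nu)/2}\|I_2h\|_{\ell^\infty}^{(1+\nu)/2}$ in the stability estimate, not from the quadratic residue. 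The requirement on $N$ is there to make the interpolation exponent $\gamma$ exceed $1$, not to ``halve'' anything.
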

We note that this is the first quantitative estimate on the marked length rigidity problem. It is even new for negatively curved surfaces where the injectivity of $g\mapsto L_g$ (modulo isometry) is known by \cite{cr1,ot}. \\

We conclude by some finiteness results. On a closed manifold $M$, we consider for $\nu_1\geq \nu_0>0$, $\theta_0>0$ and $C_0>0$ the set of  smooth metrics $g$ with Anosov geodesic flow satisfying the estimates \eqref{defAnosov} where the constants $C,\nu$ verify $C\leq C_0$, 
$\nu\in [\nu_0,\nu_1]$ and $d_{G}(E_s,E_u)\geq \theta_0$ if $d_G$ denotes the distance in the Grassmanian of the unit tangent bundle $SM$ induced by the Sasaki metric. We write $\mc{A}(\nu_0,\nu_1,C_0,\theta_0)$ for the set of such metrics. This is a closed set that consists of uniform Anosov geodesic flows.
For example, metrics with curvatures contained in $[-a^2,-b^2]$ with $a>b>0$ satisfy such property \cite[Theorem 3.2.17]{Kl2}.
In what follows, we denote by $\mc{R}_g$ the curvature tensor of $g$.

\begin{theo}\label{finiteness}
Let $M$ be a smooth closed manifold and let $\nu_1\geq \nu_0>0$, $C_0>0$ and $\theta_0>0$.  
If $\dim M=2$, for each sequence of positive numbers $B:=(B_k)_{k\in\nn}$, 
there is at most finitely many isometry classes of metrics $g$ in 
$\mc{A}(\nu_0,\nu_1,C_0,\theta_0)$ satisfying the curvature bounds 
$|\nabla_g^k\mc{R}_{g}|_{g}\leq B_k$ and with the same marked length spectrum. If $\dim M>2$ the same holds true if in addition $g$ have non-positive curvature.
\end{theo}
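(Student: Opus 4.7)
The plan is to combine a Cheeger--Gromov compactness argument with the local rigidity statement of Theorem \ref{th1}. Suppose for contradiction that the conclusion fails: there is an infinite sequence $(g_k)_{k\in\nn}$ of mutually non-isometric metrics, all lying in $\mc{A}(\nu_0,\nu_1,C_0,\theta_0)$, all satisfying $|\nabla^k_{g_k}\mc{R}_{g_k}|_{g_k}\le B_k$ for every $k$, and all sharing the same marked length spectrum $L:\mc{C}\to \rr^+$. When $\dim M\ge 3$ each $g_k$ is additionally of non-positive sectional curvature.

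\textbf{Compactness step.} The first task is to extract a $C^\infty$-convergent subsequence modulo diffeomorphism. The pointwise bound $|\mc{R}_{g_k}|_{g_k}\le B_0$ gives a uniform two-sided bound on the sectional curvature; since the systole $\min_{c\in\mc{C}}L(c)$ is positive and independent of $k$, Klingenberg's lemma provides a uniform lower bound on the injectivity radius. The uniform Anosov hypothesis, combined with a quantitative closing lemma argument with constants depending only on $(\nu_0,\nu_1,C_0,\theta_0)$, forces every point of $M$ to lie within a uniformly bounded distance of some closed geodesic whose length belongs to the fixed discrete set $L(\mc{C})$; this yields a uniform diameter bound. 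Together with the uniform curvature derivative bounds $|\nabla^k_{g_k}\mc{R}_{g_k}|_{g_k}\le B_k$, Jost--Karcher style regularity in harmonic coordinates delivers uniform $C^\infty$-bounds on the metric tensors, and Cheeger--Gromov compactness then produces diffeomorphisms $\psi_j:M\to M$ and a smooth Riemannian metric $g_\infty$ on $M$ such that $\psi_j^*g_{k_j}\to g_\infty$ in $C^\infty(M)$ along a subsequence.

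\textbf{The limit has the same marked length spectrum, then apply Theorem \ref{th1}.} Since $\mc{A}(\nu_0,\nu_1,C_0,\theta_0)$ is closed under $C^2$-limits (as noted just before the statement of the theorem) and non-positivity of curvature is preserved under $C^2$-limits, we have $g_\infty\in\mc{A}(\nu_0,\nu_1,C_0,\theta_0)$ and, in dimension $\ge 3$, $g_\infty$ has non-positive sectional curvature. By structural stability of Anosov flows, the unique closed $(\psi_j^*g_{k_j})$-geodesic in each free homotopy class $c$ converges to the unique closed $g_\infty$-geodesic in class $c$, so
\[
L_{g_\infty}(c)=\lim_{j\to\infty}L_{\psi_j^*g_{k_j}}(c)=L_{g_{k_j}}(c)=L(c),\qquad c\in\mc{C}.
\]
Thus $g_\infty$ satisfies the hypotheses of Theorem \ref{th1} and has marked length spectrum $L$. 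Theorem \ref{th1} furnishes $\varepsilon>0$ such that every smooth metric $g$ with $L_g=L$ and $\|g-g_\infty\|_{C^N(M)}<\varepsilon$ is isometric to $g_\infty$. For $j$ sufficiently large, $\psi_j^*g_{k_j}$ lies in that $C^N$-neighbourhood of $g_\infty$ and has marked length spectrum $L$, so it is isometric to $g_\infty$, and hence so is $g_{k_j}$. Therefore all sufficiently late terms of the subsequence are mutually isometric, contradicting our standing assumption.

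\textbf{Main obstacle.} The technical heart of the argument is the compactness step. The injectivity radius bound is essentially immediate from the systole being determined by $L$, but the uniform diameter bound is the delicate point: it must be extracted from the uniform Anosov data alone via a quantitative closing lemma, guaranteeing that the sequence $(M,g_k)$ neither collapses nor stretches out. Once this bound is in hand, the remainder of the proof is a rather standard combination of Cheeger--Gromov compactness, Anosov structural stability for closed orbits, and the local rigidity theorem of Theorem \ref{th1}.
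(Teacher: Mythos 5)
Your architecture (argue by contradiction, extract a Cheeger--Gromov limit, apply Theorem \ref{th1} near the limit) is the same as the paper's, but there is a genuine gap at the identity $L_{\psi_j^*g_{k_j}}(c)=L_{g_{k_j}}(c)$. The diffeomorphisms $\psi_j$ furnished by Hamilton/Cheeger--Gromov compactness are not in general homotopic to the identity, so they act nontrivially on free homotopy classes: the correct relation is $L_{\psi_j^*g_{k_j}}(c)=L_{g_{k_j}}\bigl((\psi_j)_*c\bigr)$ for some class $(\psi_j)_*\in{\rm Out}(\pi_1(M))$ depending on $j$. Consequently the metrics $\psi_j^*g_{k_j}$ need not share a common marked length spectrum as functions on $\mc{C}$, the limit need not satisfy $L_{g_\infty}=L$, and Theorem \ref{th1} --- which requires equality of marked length spectra, not equality up to a $j$-dependent permutation of $\mc{C}$ --- cannot be invoked. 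Controlling this action is precisely the content of the paper's proof beyond the compactness step: in dimension $\geq 3$ one uses that $\pi_1(M)$ is Gromov hyperbolic (a consequence of the Anosov property) and that $\til{M}$ is a ball, so ${\rm Out}(\pi_1(M))$ is finite by Gromov's theorem and one may pass to a subsequence along which $(\psi_j)_*$ is a fixed permutation of $\mc{C}$; in dimension $2$, where ${\rm Out}(\pi_1(M))$ is infinite, one first shows by a length comparison that for each fixed $c$ the classes $(\psi_j^{-1})_*c$ range over a finite subset of $\mc{C}$, and then uses the Farb--Margalit (Alexander method) fact that a mapping class acting trivially on a suitable finite collection $\mc{C}_0\subset\mc{C}$ is trivial, or a hyperelliptic involution in genus $2$, again yielding a subsequence with constant $(\psi_j)_*$. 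Without an argument of this kind your proof does not close.

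A secondary point: what you single out as the technical heart --- a uniform diameter bound --- is not where the real difficulty lies, and your sketch of it is not conclusive as written: knowing that every point lies near a closed geodesic whose length belongs to $L(\mc{C})$ bounds nothing, since $L(\mc{C})$ is unbounded; one would need every point to be uniformly close to a closed geodesic of \emph{uniformly bounded} length, and then still an argument connecting distinct such geodesics. (The paper itself is terse here, extracting from the hypotheses only the injectivity radius bound ${\rm inj}(g_k)=\tfrac12\min_{c}L(c)$, valid because metrics without conjugate points have no geodesic loops shorter than the systole, before invoking Hamilton's theorem.) The remaining steps --- closedness of $\mc{A}(\nu_0,\nu_1,C_0,\theta_0)$, persistence of non-positive curvature under $C^2$-limits, continuity of each $L_\cdot(c)$ along the convergent subsequence, and the final application of Theorem \ref{th1} --- are correct and agree with the paper.
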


Restricting to negatively curved metrics we get the finiteness results (new if $\dim M>2$): 
\begin{coro}
Let $M$ be a compact manifold. Then, for each $a>0$ and each sequence 
$B=(B_k)_{k\in\nn}$ of positive numbers, there is at most finitely many smooth  isometry classes of metrics with sectional curvature bounded above by $-a^2<0$, curvature tensor bounded by $B$ 
(in the sense of Theorem \ref{finiteness}) and same marked length spectrum.
\end{coro}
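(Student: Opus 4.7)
The plan is to deduce this corollary from Theorem \ref{finiteness} by verifying that metrics satisfying the corollary's hypotheses automatically lie in a common uniform Anosov class $\mc{A}(\nu_0,\nu_1,C_0,\theta_0)$ with non-positive (in fact negative) curvature.

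First, I would translate the curvature tensor bound into pointwise bounds on the sectional curvature. Since $|\mc{R}_g|_g \leq B_0$, for any pair of $g$-orthonormal vectors $(X,Y)$ one has
\[
|K_g(X,Y)| = |\langle \mc{R}_g(X,Y)Y, X\rangle_g| \leq B_0,
\]
so that combined with the hypothesis $K_g \leq -a^2$ we get uniform pinched negative curvature $-B_0 \leq K_g \leq -a^2 < 0$ for every metric $g$ in the family.

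Next, I would invoke the classical result of Klingenberg (\cite[Theorem 3.2.17]{Kl2}) already cited in the paper: any Riemannian metric whose sectional curvature is pinched between two negative constants has Anosov geodesic flow, and the Anosov constants $C, \nu$ appearing in \eqref{defAnosov} as well as the angle lower bound $\theta_0$ between the stable and unstable distributions depend only on the pinching constants. Hence there exist $\nu_0, \nu_1, C_0, \theta_0 > 0$ depending only on $a$ and $B_0$ such that every metric $g$ satisfying the corollary's hypotheses belongs to $\mc{A}(\nu_0,\nu_1,C_0,\theta_0)$. Since strict negative curvature is in particular non-positive, the hypotheses of Theorem \ref{finiteness} are fulfilled in both the $\dim M = 2$ and $\dim M \geq 3$ cases, and applying it with the same sequence $B=(B_k)_{k\in\nn}$ yields the desired finiteness of isometry classes.

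There is no substantive obstacle beyond this reduction: the analytic core of the finiteness statement is already contained in Theorem \ref{finiteness}. The only point requiring a brief verification is that the curvature tensor bound $B_0$ forces a lower bound on sectional curvature, which in tandem with the upper bound $-a^2$ produces the pinching needed to feed Klingenberg's theorem and thus the uniform Anosov hypothesis underlying $\mc{A}(\nu_0,\nu_1,C_0,\theta_0)$.
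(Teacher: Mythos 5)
Your proposal is correct and follows exactly the reduction the paper intends (the paper states the corollary without proof, but the remark preceding Theorem \ref{finiteness} citing Klingenberg's \cite[Theorem 3.2.17]{Kl2} makes clear that pinched negative curvature $-B_0\leq K_g\leq -a^2$ is meant to place all such metrics in a single class $\mc{A}(\nu_0,\nu_1,C_0,\theta_0)$). The only cosmetic point is that the bound $|K_g|\leq B_0$ may carry a dimensional constant depending on the normalization of $|\mc{R}_g|_g$, which does not affect the argument.
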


We remark that the $C^\infty$ assumptions on the background metric $g_0$ in all our results and the boundedness assumptions on the $C^\infty$ norms of the curvatures in Theorem \ref{finiteness} can be relaxed to $C^k$ for some fixed $k$ depending on the dimension.\footnote{The smoothness assumptions come from the fact we are using certain results based on microlocal analysis; it is a standard fact that only finitely many derivatives are sufficient for microlocal methods. It is likely that with some technical works one could improve the result to $C^3$ or $C^4$ regularity.}\\

\textbf{Acknowledgements.} We would like to thank K. Burns, S. Gou\"ezel, G. Knieper,  G. Massuyeau, M. Mazzucchelli, D. Monclair, F. Naud, F. Paulin, G. Rivi\`ere, M. Salo, J-M. Schlenker, R. Tessera for useful discussions. C.G. is supported partially by ERC consolidator grant IPFLOW.

\section{The marked length spectrum and its linearisation}

\subsection{Marked length spectrum}
We consider a smooth manifold $M$ equipped with a smooth Riemannian metric $g$. We let $\pi_1(M)$ be the fundamental group of $M$ and $\mc{C}$ be the set of conjugacy classes in $\pi_1(M)$. It is well-known that $\mc{C}$ corresponds to the set of free-homotopy classes of $M$.
Assume now that the geodesic flow $\varphi_t$ of $g$ on the unit tangent bundle $SM$ is Anosov. We will call \emph{Anosov manifolds} such Riemannian manifolds and let
\[\mc{A}:=\{ g\in C^\infty(M;S^2_+T^*M); g \textrm{ has Anosov geodesic flow}\}. \]
We recall that $\varphi_t$ with generating vector field $X$ is called Anosov if there exists some constants $C > 0$ and $\nu > 0$ such that for all $z = (x,v) \in SM$, there is a continuous flow-invariant splitting
\be \label{eq:split} T_z(SM) = \R X(z) \oplus E_u(z) \oplus E_s(z), \ee
where $E_s(z)$ (resp. $E_u(z)$) is the \textit{stable} (resp. \textit{unstable}) vector space in $z$, which satisfy
\be \label{defAnosov}
\begin{array}{c} 
|d\varphi_t(z).\xi|_{\varphi_t(z)} \leq C e^{-\nu t} |\xi|_{z}, ~~ \forall t \geq 0, \xi \in E_s(z) \\
|d\varphi_t(z).\xi|_{\varphi_t(z)} \leq C e^{-\nu |t|} |\xi|_{z}, ~~ \forall t \leq 0, \xi \in E_u(z)\end{array} \ee
The norm, here, is given in terms of the Sasaki metric of $g$. By Anosov structural stability \cite{An,DMM}, $\mc{A}$ is an open set.
In particular, a metric $g\in\mc{A}$ has no conjugate points (see \cite{wk}) and there is a unique geodesic $\gamma_c$ in each free-homotopy class $c\in \mc{C}$. We can thus define the \emph{marked length spectrum} of $g$ by \eqref{defLg}.

It will also be important for us to consider the mapping $g\mapsto L_g$ from the space of metrics 
to the set of sequences. In order to be in a good functional setting and since we shall work locally, 
we fix a smooth metric $g_0\in \mc{A}$ and consider the metrics $g$ in a neighborhood 
$\mc{U}_{g_0}$ of $g_0$ in $C^N(M;S^2_+T^*M)$ for some $N$ large enough and which will be chosen later. We can consider the map 
\begin{equation}\label{Lnormalise} 
\mc{L}: \mc{U}_{g_0}\to \ell^\infty(\mc{C}), \quad \mc{L}(g)(c):=\frac{L_g(c)}{L_{g_0}(c)}.
\end{equation} 
which we call the \emph{$g_0$-normalized marked length spectrum}. We notice from the definition of the length that $\mc{L}(g)\in [0,2]$ if $g\leq 2g_0$, justifying that $\mc{L}$ maps to $\ell^\infty(\mc{C})$.

\begin{prop}\label{LisC2} 
The functional \eqref{Lnormalise} is $C^2$ near $g_0$ if we choose the 
$C^3(M;S^2_+T^*M)$ topology. In particular, there is a neighborhood 
$\mc{U}_{g_0}\subset C^3(M;S^2_+T^*M)$ of $g_0$ and $C=C(g_0)>0$ such that for all $g\in \mc{U}_{g_0}$
\begin{equation}\label{taylor} 
\|\mc{L}(g)-\textbf{1}-D\mc{L}_{g_0}(g-g_0)\|_{\ell^\infty(\mc{C})}\leq C\|g-g_0\|^2_{C^3(M)}.
\end{equation}
\end{prop}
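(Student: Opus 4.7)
The plan is to exploit the envelope principle: since $\gamma_c^g$ is a critical point of the length functional $L_g$ on loops in class $c$, perturbing $g$ contributes to first order only through the change of the integrand evaluated at the \emph{fixed} curve $\gamma_0 := \gamma_c^{g_0}$, while the induced variation of the critical curve itself contributes only at second order. Throughout, write $h := g - g_0$ and parametrise $\gamma_0$ by its $g_0$-arc length on $[0,T]$, with $T := L_{g_0}(c)$, so that $g_0(\dot\gamma_0,\dot\gamma_0)=1$.

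\textbf{Persistence and smooth dependence of orbits.} Shrinking $\mc{U}_{g_0}$ if necessary, Anosov structural stability ensures that every $g\in\mc{U}_{g_0}$ has Anosov geodesic flow, and hence a unique closed geodesic $\gamma_c^g$ in each class $c$. Writing $\gamma_c^g = \exp_{\gamma_0}(\xi_c)$ for a vector field $\xi_c$ along $\gamma_0$, and applying the implicit function theorem to the Poincaré return map of a transversal to $\gamma_0$, one obtains that $g\mapsto \xi_c$ is $C^\infty$ from $\mc{U}_{g_0}\subset C^3$ to loops, with
\[
\|\xi_c\|_{C^1}\leq C\,\|h\|_{C^3}, \qquad C \text{ independent of } c.
\]
This uniformity reflects the uniform spectral gap of the linearised Poincaré maps, which follows from the uniform Anosov rates \eqref{defAnosov}.

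\textbf{Expansion of the length.} On the fixed curve,
\[
L_g(\gamma_0)=\int_0^T\sqrt{1+h(\dot\gamma_0,\dot\gamma_0)}\,dt = T+\tfrac12\int_0^T h(\dot\gamma_0,\dot\gamma_0)\,dt+R_1,
\]
with $|R_1|\leq C T\,\|h\|_{C^0}^2$ by Taylor expansion of $s\mapsto\sqrt{1+s}$. By the envelope principle (vanishing of the first variation of $L_g$ at $\gamma_c^g$) combined with the second-variation formula,
\[
|L_g(c)-L_g(\gamma_0)|\leq C\,\|\xi_c\|_{H^1}^2,
\]
the operator norm of the length Hessian of $L_g$ on periodic perpendicular fields being bounded uniformly by the $C^0$-norm of the curvature of $g$. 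Since $\|\xi_c\|_{H^1}^2\leq C T\,\|\xi_c\|_{C^1}^2\leq C T\,\|h\|_{C^3}^2$, summing both expansions and dividing by $T=L_{g_0}(c)$ yields \eqref{taylor}, and identifies $D\mc{L}_{g_0}(h)(c)=\tfrac{1}{2T}\int_0^T h(\dot\gamma_0,\dot\gamma_0)\,dt$ as the normalised $X$-ray transform of $h$. The $C^2$ regularity of $\mc{L}$ follows by applying the same scheme to the first and second differentials (each of which is in turn a smooth functional of $g$ and $\xi_c(g)$).

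\textbf{Main obstacle.} The only non-trivial point is uniformity in $c\in\mc{C}$: a priori the constants from the implicit function theorem and from the second-variation estimate could deteriorate as $L_{g_0}(c)\to\infty$, which would destroy the $\ell^\infty(\mc{C})$ bound. Both uniformities rest on the uniform hyperbolicity \eqref{defAnosov}, which provides a spectral gap for the linearised Poincaré maps bounded away from $1$ independently of the orbit, together with the linear growth in $T$ of all relevant integrals prior to renormalising by $L_{g_0}(c)$.
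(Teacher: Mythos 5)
Your overall strategy is a legitimate alternative to the paper's: the paper does not use the envelope principle at all, but instead imports the structural stability theorem of De la Llave--Marco--Moriy\'on, which packages the period of the continued orbit as $\ell(\gamma_X)=\int_{\gamma_{X_0}}\theta_X$ for a $C^2$ family $X\mapsto\theta_X\in C^0(SM)$; dividing by $\ell(\gamma_{X_0})$ then gives uniformity in $c$ for free, and the whole proposition reduces to composing with the smooth map $g\mapsto X_g$. Your first/second-variation argument is the more classical way to see why $D\mc{L}_{g_0}$ is the normalised $X$-ray transform, and it would yield \eqref{taylor} once the uniform bound $\|\xi_c\|_{C^1}\leq C\|h\|_{C^3}$ is in hand.

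That bound, however, is exactly where your argument has a genuine gap. You derive it by ``applying the implicit function theorem to the Poincar\'e return map of a transversal to $\gamma_0$'' and you justify uniformity in $c$ by the uniform spectral gap of $d\varphi_T-\mathrm{Id}$ on the transversal. The spectral gap only controls $\|(\mathrm{Id}-dP_{g_0})^{-1}\|$; it says nothing about the size of the perturbation $P_g-P_{g_0}$ of the return map itself. For an orbit of period $T$, Gronwall's inequality only gives $\|P_g-P_{g_0}\|_{C^1}\leq Ce^{CT}\|h\|_{C^3}$, so the naive IFT on the full-period return map produces a fixed point at distance $O(e^{CT}\|h\|_{C^3})$ from $\gamma_0$ --- useless as $L_{g_0}(c)\to\infty$, and fatal for the $\ell^\infty(\mc{C})$ estimate. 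Obtaining $\|\xi_c\|_{C^1}\leq C\|h\|_{C^3}$ with $C$ independent of $c$ requires a genuinely global argument along the orbit (a shadowing/structural-stability argument that solves the perturbation equation using the hyperbolic splitting at every point of the orbit, not just the return map at one transversal); this is precisely the content of the De la Llave--Marco--Moriy\'on result the paper invokes, and it is a missing idea rather than a routine refinement of what you wrote. Separately, your claim that $g\mapsto\xi_c$ is $C^\infty$ from a $C^3$ neighbourhood is an overstatement (structural stability gives finite regularity with loss of derivatives), but only $C^2$ dependence is needed, so this is harmless. Once you replace the return-map IFT by the correct uniform shadowing estimate, the rest of your expansion (Taylor of $\sqrt{1+h(\dot\gamma_0,\dot\gamma_0)}$ plus the second-variation bound $|L_g(c)-L_g(\gamma_0)|\leq C\|\xi_c\|_{H^1}^2$, both scaling linearly in $T$ before renormalisation) is sound.
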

\begin{proof}
Let $\mc{M}:=S_{g_0}M$ be the unit tangent bundle for $g_0$ and $X_0$ the geodesic vector field.
We use the stability result in the work of De la Llave-Marco-Moryion \cite[Appendix A]{DMM} (see also the proof of \cite[Lemma 4.1]{DGRS})
which says that there is a neighborhood $\mc{V}_{X_0}$ in $C^2(\mc{M};T\mc{M})$ of $X_0$ and a $C^2$ map $X\in \mc{V}_{X_0}\mapsto \theta_X \in C^0(\mc{M})$ such that for each $X\in \mc{V}_{X_0}$ and each fixed periodic orbit $\gamma_{X_0}$ of $X_0$, there is a closed orbit $\gamma_X$ freely-homotopic to $\gamma_{X_0}$ 
and the period $\ell(\gamma_X)$ is $C^2$ as a map 
$X\in \mc{V}_{X_0}\mapsto \ell(\gamma_X) \in\rr^+$  given by 
\[ \ell(\gamma_X)=\int_{\gamma_{X_0}}\theta_X.\]
In particular, we see that $X\in \mc{V}_{X_0}\mapsto \ell(\gamma_X)/\ell(\gamma_{X_0})$ 
is $C^2$ and its derivatives of order $j=1,2$  are bounded:
\[ \|D^j\ell(\gamma_X)/\ell(\gamma_{X_0})\|_{C^2\to \rr}\leq  \sup_{X\in \mc{V}_{X_0}}\|D^j\theta_X\|_{C^2\to C^0}\leq C\]
for some $C$ depending on $\mc{V}_{X_0}$. 
Now  we fix $c\in \mc{C}$ and choose the geodesic $\gamma_c(g_0)$ for $g_0$ as being the element $\gamma_{X_0}$ above, and we take $\mc{U}_{g_0}$ a small neighborhood of $g_0$ in the $C^3$ topology. The map $X: g\in \mc{U}_{g_0}\mapsto X_{g}\in C^2(\mc{M};T\mc{M})$ 
is defined so that $X_g$ is the geodesic vector field of $g$, where we used the natural diffeomorphism between $\mc{M}=S_{g_0}M$ and $S_gM:=\{(x,v)\in TM; g_x(v,v)=1\}$ obtained by scaling the fibers to pull-back the field on $\mc{M}$. It is a $C^\infty$ map between the Banach space $C^3(M;S^2_+T^*M)$ and $C^2(\mc{M};T\mc{M})$. Thus the composition $g\mapsto \ell(\gamma_{X_g})$, which is simply 
the map $g\mapsto L_g(c)$, is $C^2$ on $\mc{U}_{g_0}$ and the second derivative
is uniformly bounded in $\mc{U}_{g_0}$. The inequality \eqref{taylor} follows directly.
\end{proof}

\subsection{The X-ray transform}\label{Xray}

The central object on which stands our proof is the \textit{X-ray transform} over symmetric $2$-tensors, which is nothing more than the linearization $D\mc{L}$ that appeared in Proposition \ref{LisC2}. It is a direct computation, which appeared already in \cite{GuKa}, that for 
$h\in C^3(M;S^2T^*M)$
\[ (D\mc{L}(g_0).h)(c)=\frac{1}{2 L_{g_0}(c)}\int_{0}^{L_{g_0}(c)}
h_{\gamma_c(t)}(\dot{\gamma}_c(t),\dot{\gamma}_c(t))dt\]
where $\gamma_c(t)$ is the geodesic for $g_0$ homotopic to $c$ and $\dot{\gamma}_c(t)$ its time derivative.
This leads us to define the so-called \emph{X-ray transform on $2$-tensors} for $g_0$ as the operator 
\begin{equation}\label{Xray}
I_2^{g_0}: C^3(M;S^2T^*M)\to \ell^\infty(\mc{C}), \qquad 
I_2^{g_0}h(c):= \frac{1}{L_{g_0}(c)}\int_{0}^{L_{g_0}(c)} h_{\gamma_c(t)}(\dot{\gamma}_c(t),\dot{\gamma}_c(t))dt
\end{equation}
Note that if $\varphi_t$ is the geodesic flow for $g_0$ (the flow of $X_{g_0}$), this can be rewritten as 
\[ I_2^{g_0}h(c)=\frac{1}{L_{g_0}(c)}\int_{0}^{L_{g_0}(c)} \pi_2^*h(\varphi_t(z))dt,\]
where $z\in \gamma_c$ is any point on the closed orbit and here, for $m\in \nn$, we denote by  $\pi_m^*$ is 
the natural continuous maps (for all  $k\in \nn\cup\{\infty\}$)
\[ \pi_m^* :C^k(M, S^m T^*M) \rightarrow C^k(SM), \qquad f \mapsto (\pi_m^* f) (x,v) = f(x)(\otimes^m v), \]
where we now use $SM$ as a notation for the unit tangent bundle for $g_0$.
More generally, if we define the \emph{X-ray transform} on $SM$ by 
\be 
I^{g_0} : C^0(SM) \rightarrow \ell^\infty(\mc{C}) ,\qquad
I^{g_0}h(c):= \frac{1}{L_{g_0}(c)}\int_{0}^{L_{g_0}(c)}h(\varphi_t(z))dt 
 \ee
with $z\in \gamma_c$, we will also define the \emph{X-ray transform on $m$-tensors} as the operator (for $m\in\nn$) defined on $C^0(M;S^mT^*M)$ by
\begin{equation}\label{Im}
I_m^{g_0}:= I^{g_0}\pi_m^*.
\end{equation}
When the background metric is fixed, we will remove the $g_0$ index and just write $I_m,I$ instead of $I_m^{g_0}, I^{g_0}$.
There is also a dual operator acting on distributions
\[ {\pi_m}_* : C^{-\infty}(SM) \rightarrow C^{-\infty}(M, S^m T^*M), \qquad 
\cjg {\pi_m}_*u,f\cjd:=\cjg u,\pi_m^*f\cjd \]
where $\cjg\cdot,\cdot\cjd$ denotes the distributional pairing. 
Let $H^s(SM)$ (resp. $H^s(M;S^mT^*M)$) denote the $L^2$-based Sobolev space of order $s\in\rr$ on $SM$ (resp. on $m$-tensors on $M$).
We note that for all $s\in\rr$, the following map are bounded
\begin{equation}\label{contpullback}
\pi_m^*: H^s(M;S^mT^*M)\to H^s(SM) ,\quad {\pi_m}_* :H^s(SM)\to H^s(M;S^mT^*M).
\end{equation}

Let us now explain the notion of \textit{solenoidal injectivity} of the X-ray transform. If $\nabla$ denotes the Levi-Civita connection of $g_0$ and $\sigma : \otimes^{m+1} T^*M \rightarrow S^{m+1} T^*M$ the symmetrisation operation, we define the symmetric derivative $D := \sigma \circ \nabla : C^\infty(M; S^m T^*M) \rightarrow C^\infty(M; S^{m+1} T^*M)$. The divergence operator is its formal adjoint given by $D^*f := -\text{Tr}(\nabla f)$, where $\text{Tr} :C^\infty(M; S^m T^*M) \rightarrow C^\infty(M; S^{m-2} T^*M)$ denotes the trace map defined by 
${\rm Tr}(q)(v_1, ...,v_{m-2}) = \sum_{i=1}^n q(e_i,e_i,v_1,...,v_{m-2}),$
if $(e_1,...e_n)$ is a local orthonormal basis of $TM$ for $g_0$.
If $f \in C^{k,\alpha}(M;S^m T^*M)$ with $(k,\alpha)\in \nn\x (0,1)$, there exists a unique decomposition of the tensor $f$ such that 
\be \label{eq:decomp} 
f = f^s + Dp,\quad  D^*f^s = 0, 
\ee
where $f^s \in C^{k,\alpha}(M;S^m T^*M)$ and $p \in 
C^{k+1,\alpha}(M; S^{m-1} T^*M)$ (see \cite[Theorem 3.3.2]{sh}). The tensor $f^s$ is called the divergence-free part (or solenoidal part) of $f$. 
It is direct to see that for each $f\in C^k(M;S^mT^*M)$, we have $\pi_{m+1}^*Df=X\pi_m^*f$ and that for $u\in C^k(SM)$ with $k\geq 1$ we have
$I(Xu)=0$ if $X=X_{g_0}$ is the geodesic vector field for $g_0$ on $SM$.  This implies that for $k\geq 1$
\[ \forall f\in C^k(M;S^mT^*M),\quad  I_{m+1}(Df)=0.\]
Thus in general it is impossible to recover the exact part $Dp$ of a tensor $f$ from $I_mf$.
We now recall some results about solenoidal injectivity of $I_m$, defined as the property
\begin{equation}\label{soleninj}
\ker I_m \cap C^\infty(M;S^mT^*M)\cap \ker D^*=0.
\end{equation}
\begin{prop}\label{injIm}
Let $(M,g_0)$ be a smooth Riemannian manifold and assume that the geodesic flow of $g_0$ is Anosov. 
Then $I_m$ is solenoidal injective in the sense \eqref{soleninj} when:
\begin{enumerate}
\item $m=0$ or $m=1$, see \cite[Theorem 1.1 and 1.3]{DaSh},
\item $m\in\nn$ and $\dim(M)=2$, see \cite[Theorem 1.4]{gu},\
\item $m\in\nn$ and $g_0$ has non-positive curvature, see \cite[Theorem 1.3]{cs}. 
\end{enumerate}
\end{prop}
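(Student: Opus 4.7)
Although the three cases are separate theorems in the literature, all go through a common scheme; I would follow the same route and indicate where the cases diverge.

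\emph{Step 1: reduction to a transport equation on $SM$.} Suppose $f\in C^\infty(M;S^mT^*M)$ satisfies $D^*f=0$ and $I_mf=0$. Setting $u:=\pi_m^*f\in C^\infty(SM)$, the hypothesis reads $Iu=0$, i.e.\ $\int_\gamma u=0$ along every closed orbit of the geodesic flow. Since $g_0$ is Anosov, the smooth Livsic theorem (Livsic, de la Llave--Marco--Moriyon) produces $w\in C^\infty(SM)$ with $Xw=u=\pi_m^*f$, where $X$ is the geodesic vector field. The whole problem becomes to show that such a smooth primitive forces $f=0$ whenever $D^*f=0$.

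\emph{Step 2: vertical Fourier decomposition.} I would decompose $w=\sum_{k\geq 0}w_k$ into spherical harmonics along the fibers of $SM\to M$ and split $X=X_+ + X_-$ into raising and lowering operators on this decomposition. Since $\pi_m^*f$ has vertical degree at most $m$ of fixed parity, the equation $Xw=\pi_m^*f$ becomes a finite system of transport equations coupling $w_k$ to $w_{k\pm 1}$, giving control of the high Fourier modes in terms of the low ones. This setup is standard since \cite{GuKa,DaSh}.

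\emph{Step 3: Pestov-type energy identity, with case analysis.} The three cases diverge precisely here. For $m=0,1$ in arbitrary dimension the Pestov identity of Dairbekov--Sharafutdinov \cite{DaSh} closes the argument without curvature assumption, because the low vertical degree keeps the curvature term under control and one directly recovers $w$ as a pullback (constant, resp.\ of a $1$-form). For $\dim M=2$ and general $m$, the circle bundle structure of the fibers of $SM$ allows a cleaner Fourier analysis and, combined with the microlocal/Fredholm framework of \cite{gu}, yields the vanishing without sign assumption on the curvature. For $\dim M\geq 3$ with non-positive sectional curvature, the Pestov identity of Croke--Sharafutdinov \cite{cs} expresses a non-negative quadratic expression in $w$ as the sum of a non-negative derivative term and a curvature term that is non-negative precisely because of the curvature hypothesis; both must vanish, which gives $w=\pi_{m-1}^*p$ for some tensor $p$ and hence $f=Dp$; combined with $D^*f=0$ and integration by parts against $p$, this yields $p=0$ and $f=0$.

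The main obstacle is the sign of the curvature term in the Pestov identity for $m\geq 2$ in dimension $\geq 3$: outside the non-positive curvature case and outside the surface setting no general sign argument is known, which is precisely why Theorem \ref{th1} imposes the matching curvature hypothesis in higher dimension.
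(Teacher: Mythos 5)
Your outline is correct and matches how the paper treats this statement: Proposition \ref{injIm} is not proved in the text but simply cites \cite{DaSh}, \cite{gu} and \cite{cs}, and your three steps (smooth Livsic theorem to reduce $I_mf=0$ to $Xw=\pi_m^*f$ with $w\in C^\infty(SM)$, vertical Fourier decomposition with the raising/lowering splitting of $X$, and a Pestov-type identity whose curvature term is controlled either by low vertical degree, by the surface structure plus the Fredholm framework, or by the non-positive curvature hypothesis) are a faithful summary of the arguments in those references. One minor slip: in the non-positive curvature case the integration by parts gives $Dp=0$ and hence $f=Dp=0$, not $p=0$ itself (for instance when $m=1$ the potential $p$ is a function determined only up to a constant).
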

The case (2) with $m=2$ was first proved in \cite[Theorem 1.1]{psu1}.

\section{The operator $\Pi$ and stability estimates}

In this section, we briefly review the results of the paper \cite{gu} and in particular the operator $\Pi$ defined there. As before we assume that $(M,g_0)$ has Anosov geodesic flow and let $X=X_{g_0}$ be its geodesic vector field.

\subsection{The operator $\Pi$}
Since $X$ preserves the Liouville measure $\mu$, the operator $-iX$ is an unbounded self-adjoint operator on $L^2(SM):=L^2(SM,d\mu)$. The $L^2$-spectrum is then contained in $\rr$ 
and the resolvents $R_\pm(\lambda):=(-X\pm\lambda)^{-1}$ are well-defined and bounded on $L^2(SM)$ for $\text{Re}(\lambda) > 0$, they are actually given by 
\[ R_+(\lambda)f(z)=\int_0^{+\infty} e^{-\lambda t}f(\varphi_t(z)) dt, \quad R_-(\lambda)f(z)=-\int_{-\infty}^{0} e^{\lambda t}f(\varphi_t(z)) dt \]
In \cite{fs}, Faure-Sj\"ostrand (see also \cite{BuLi,dz}) proved that for Anosov flows, there exists a constant $c > 0$ such that for any $s>0,r<0$, one can construct a Hilbert space $\mathcal{H}^{r,s}$ such that $H^s(SM) \subset \mathcal{H}^{r,s} \subset H^r(SM)$ and $-X-\lambda : \text{Dom}_{\mathcal{H}^{r,s}}(X) \rightarrow\mathcal{H}^{r,s}$ (with ${\rm Dom}_{\mathcal{H}^{r,s}}(X) :=\left\{u \in \mathcal{H}^{r,s}; X u \in \mathcal{H}^{r,s}\right\}$) is an unbounded Fredholm operator with index $0$ on $\text{Re}(\lambda) > -c\min(|r|,s)$; for $-X+\la$ the same holds with a Sobolev space $\mc{H}^{s,r}$ satisfying the same properties as above. Moreover, $-X \pm \lambda$ is invertible on these spaces for $\text{Re}(\lambda)$ large enough, the inverses coincide with $R_\pm(\lambda)$ when acting on $H^s(SM)$ and extend meromorphically to the half-plane $\text{Re}(\lambda) > -c\min(|r|,s)$, with poles of finite multiplicity. 

An Anosov geodesic flow is mixing \cite{An}, and $R_\pm(\la)$ has a simple pole at $\la=0$ with rank $1$ residue operator (\cite[Lemma 2.5]{gu}): one can then write the Laurent expansion\footnote{up to assuming that the Liouville measure $\mu$ is normalised to have mass $1$.}:
\be R_+(\lambda) = \dfrac{1 \otimes 1}{\lambda} + R_0 + \mathcal{O}(\lambda), ~~~ R_-(\lambda) = -\dfrac{1 \otimes 1}{\lambda} - R_0^* + \mathcal{O}(\lambda), \ee
where $R_0,R_0^* : H^s(SM) \rightarrow H^{r}(SM)$ are bounded. 
The operator $\Pi$ is then defined by:
\begin{equation}\label{defPi} 
\Pi := R_0 + R_0^* 
\end{equation}

The following Theorem was obtained by the first author in \cite{gu}:
\begin{prop}{\cite[Theorem 1.1]{gu}}
\label{th:gu1}
The operator $\Pi : H^s(SM) \rightarrow H^r(SM)$ is bounded, for any $s > 0, r < 0$, with infinite dimensional range, dense in the space of invariant distributions $C^{-\infty}_{\text{inv}}(SM):=\left\{w \in C^{-\infty}(SM); Xw =0\right\}$. It is a self-adjoint map $H^s(SM) \rightarrow H^{-s}(SM)$, for any $s > 0$, and satisfies:
\begin{enumerate}
\item $\forall f \in H^s(SM), X \Pi f = 0, $
\item $\forall f \in H^{s}(SM)$ such that $Xf\in H^s(SM)$, $\Pi X f = 0.$\footnote{In \cite{gu}, it is shown that $\Pi Xf=0$ if $f\in H^{s+1}(SM)$, but this implies the result by a density argument and the approximation result \cite[Lemma E.47]{dz2}.}
\end{enumerate}
If $f \in H^s(SM)$ with $\langle f,1\rangle_{L^2} = 0$, then $f \in \ke \Pi$ if and only if there exists a solution $u \in H^s(SM)$ to the cohomological equation $X u = f$,
and $u$ is unique modulo constants.
\end{prop}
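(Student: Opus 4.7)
The plan is to extract every claim from the Laurent expansion of the two resolvents $R_\pm(\lambda) = (-X \pm \lambda)^{-1}$ at $\lambda = 0$, viewed on the anisotropic Sobolev spaces $\mathcal{H}^{r,s}$, $\mathcal{H}^{s,r}$ of Faure--Sj\"ostrand. Boundedness $\Pi : H^s(SM) \to H^r(SM)$ follows because $R_0$, $R_0^*$ are the holomorphic parts of $R_\pm$ at $\lambda=0$ on those spaces, combined with the continuous inclusions $H^s(SM) \hookrightarrow \mathcal{H}^{r,s} \cap \mathcal{H}^{s,r}$ and $\mathcal{H}^{r,s} + \mathcal{H}^{s,r} \hookrightarrow H^r(SM)$. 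For self-adjointness, since $X$ is $L^2$-skew-adjoint with respect to the normalised Liouville measure one has $R_+(\lambda)^* = R_-(\overline{\lambda})$; matching Laurent coefficients confirms that the $R_0^*$ appearing in the expansion of $R_-$ is genuinely the $L^2$-adjoint of $R_0$, whence $\Pi^* = R_0^* + R_0 = \Pi$ as a map $H^s \to H^{-s}$.

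For properties (1) and (2), I would substitute the Laurent expansions into the identities $(-X+\lambda)R_+(\lambda) = I = R_+(\lambda)(-X+\lambda)$ and match coefficients. The $\lambda^{-1}$ terms vanish because $X\mathbf{1} = 0$ and, by skew-adjointness, $\langle Xf, 1\rangle = 0$; the $\lambda^0$ terms yield
\[ X R_0 = R_0 X = (1 \otimes 1) - I, \]
and the analogous computation for $R_-(\lambda)$ gives $X R_0^* = R_0^* X = I - (1 \otimes 1)$. Adding produces $X\Pi = 0$ and $\Pi X = 0$; the identity $\Pi X f = 0$ extends from $f \in H^{s+1}$ to $\{f \in H^s : Xf \in H^s\}$ by the density approximation referenced in the footnote.

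The characterization of $\ker\Pi$ is where the argument has real content. One direction is immediate: if $f = Xu$ with $u, Xu \in H^s$, property (2) yields $\Pi f = 0$. For the converse, assume $f \in H^s$, $\langle f, 1\rangle = 0$, and $\Pi f = 0$; setting $u := -R_0 f$ and using $XR_0 = (1 \otimes 1) - I$ gives $Xu = f$, while uniqueness modulo constants is immediate from ergodicity. \emph{The main obstacle} is to upgrade the regularity from $u \in \mathcal{H}^{r,s}$ (where $R_0 f$ naturally lives) to $u \in H^s(SM)$. For this I would use the microlocal content of Faure--Sj\"ostrand: $\mathrm{WF}(R_0 f)$ is contained in the unstable conormal bundle $E_u^*$ while $\mathrm{WF}(R_0^* f)$ is contained in $E_s^*$. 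The hypothesis $\Pi f = 0$ forces $R_0 f = -R_0^* f$, and the Anosov transversality $E_s^* \cap E_u^* = \{0\}$ collapses the wavefront set above nonzero covectors, yielding $u \in H^s(SM)$ with the regularity inherited from $f$.

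Finally, the range of $\Pi$ lies in $C^{-\infty}_{\mathrm{inv}}(SM)$ by (1); density and infinite-dimensionality are obtained by a duality argument leveraging the cohomological characterization just established. Any invariant distribution orthogonal to $\mathrm{Range}(\Pi)$ would, by self-adjointness, pair trivially against all $\Pi f$; the characterization then forces it to be a coboundary $Xu$ plus a constant, and the constraint of flow-invariance together with Anosov ergodicity reduces it to a scalar multiple of $\mathbf{1}$, which is explicitly excluded by testing against a $\Pi f$ of nonzero mean. Infinite-dimensionality of the range is automatic since $C^{-\infty}_{\mathrm{inv}}(SM)$ already contains the infinite-dimensional span of delta measures on closed orbits.
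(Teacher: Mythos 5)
First, note that the paper does not prove this proposition: it is quoted verbatim from \cite[Theorem 1.1]{gu}, and the only new ingredient supplied here is the footnote extending property (2) from $f\in H^{s+1}$ to $\{f\in H^s:\ Xf\in H^s\}$ by density. Your reconstruction follows the same route as the cited proof, and the core of it is sound: boundedness via the sandwich $H^s\hookrightarrow \mathcal{H}^{r,s}\cap\mathcal{H}^{s,r}$ and $\mathcal{H}^{r,s}+\mathcal{H}^{s,r}\hookrightarrow H^r$; properties (1)--(2) by matching Laurent coefficients in $(-X\pm\lambda)R_\pm(\lambda)=I=R_\pm(\lambda)(-X\pm\lambda)$; and, for the kernel characterization, the key point that $\Pi f=0$ forces $u:=-R_0f=R_0^*f$ to lie in $\mathcal{H}^{r,s}\cap\mathcal{H}^{s,r}$, which is microlocally $H^s$ everywhere because $E_u^*\cap E_s^*=\{0\}$. (One small slip: skew-adjointness of $X$ gives $R_+(\lambda)^*=-R_-(\overline{\lambda})$, not $R_-(\overline{\lambda})$; with the sign conventions of the displayed Laurent expansions your conclusion that the $R_0^*$ of $R_-$ is the genuine adjoint of $R_0$ is nevertheless correct.) You also correctly invoke the approximation argument for the $H^{s+1}\to\{Xf\in H^s\}$ upgrade in (2).

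The genuine gap is in the density-of-range argument. You take an invariant distribution $w$ annihilating $\mathrm{Range}(\Pi)$, deduce $\Pi w=0$ by self-adjointness, and then apply ``the characterization just established'' to write $w=Xu+c$. But that characterization was proved only for $f\in H^s$ with $s>0$, whereas a general invariant distribution lies in $H^{-s_0}$ for some $s_0>0$ and is not in the domain on which you constructed $R_0$; extending $\Pi$, the identity $XR_0=1\otimes 1-I$, and the wavefront argument to negative regularity requires choosing new anisotropic spaces and is not automatic (indeed $H^{-s_0}\not\subset\mathcal{H}^{r,s}$ for $s>0$). Moreover, even granting $w=Xu+c$ with $Xw=0$, the step ``invariance plus ergodicity reduces $w$ to a constant'' uses $\langle X^2u,u\rangle=-\|Xu\|^2$, which is an $L^2$ computation and does not apply to distributional $u$. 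So the density (and hence your derivation of infinite-dimensionality of the range from it) is not established by the argument as written; this part genuinely requires the separate argument of \cite{gu}. Since the present paper never uses density of the range, this does not affect anything downstream, but as a proof of the full statement it is incomplete at that point.
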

We also add the following property 
\begin{equation}\label{Pion1}
\Pi1=0
\end{equation}
which follows directly from $R_\pm(\la)1=\pm 1/\la$.
The link between the X-ray transform $I$ and the operator $\Pi$ is rather unexplicit and given by the Livsic theorem \cite{anl}. For instance if $f \in C^\infty(SM)$ is in the kernel of the X-ray transform, i.e. $If = 0$, then we know by the smooth Livsic theorem that there exists $u \in C^\infty(SM)$ such that $f = Xu$ and thus $\Pi f = \Pi X u = 0$ by Theorem \ref{th:gu1}, (ii).

\begin{rema}
In the study of the X-ray transform on a manifold with boundary, it is natural to introduce the normal operator $I^*I$. It satisfies $XI^*Iu=0$, for any $u \in C^\infty$ and $I^*IXu = 0$ if $u$ vanishes on the boundary. For closed manifolds, the operator $\Pi$ is the replacement of the operator $I^*I$ used for manifolds with boundary (e.g. in \cite{PeUh,gu2,SUV,tl}).
\end{rema}

\subsection{The operators $\Pi_m$}
For $m\in\nn$, we introduce the operator $\Pi_m := {\pi_m}_* \Pi \pi_m^*$ mapping 
 $C^\infty(M;S^mT^*M)$ to $C^{-\infty}(M;S^mT^*M)$. In \cite{gu}, the first author
 studied the microlocal properties of $\Pi_m$ by using in particular the works \cite{fs,dz}.

\begin{prop}{\cite[Theorem 3.5, Lemma 3.6]{gu}}\footnote{In Lemma 3.6 in \cite{gu}, there is a typo as for injectivity of $\Pi \pi_m^*$, one needs to acts on distributions vanishing on constants.}
\label{th:gu2}
The operator $\Pi_m$ is a pseudodifferential operator of order $-1$ which is elliptic on solenoidal tensors in the sense that there exists pseudodifferential operators $Q, S, R$ of respective order $1, -2, -\infty$ such that:
\[ Q\Pi_m= {\rm Id} + DSD^* + R \]
Moreover for any $s > 0$, $\Pi \pi_m^* : H^{-s}(M;S^m T^* M)\cap \ker D^* \rightarrow H^{-s}(SM)$ is bounded. When restricted to $\{f\in H^{-s}(M;S^m T^* M); f\in \ker D^*, \cjg \pi_m^*f,1\cjd_{L^2(SM)}=0\}$ it is injective if $I_m$ is solenoidal injective in the sense of \eqref{soleninj}.
\end{prop}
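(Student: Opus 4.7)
The plan, following \cite{gu}, is to establish the three assertions in turn: (A) $\Pi_m$ is pseudodifferential of order $-1$ with principal symbol elliptic in the solenoidal directions, yielding the parametrix identity; (B) the mapping property of $\Pi \pi_m^*$ on low-regularity solenoidal tensors; (C) injectivity via reduction to the $X$-ray transform $I_m$.

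\emph{For (A),} the starting point is the microlocal structure of $\Pi = R_0 + R_0^*$ from \cite{fs,dz}: the Schwartz kernel of $R_\pm(\lambda)$ on $SM \times SM$ has wavefront set contained in the union of the conormal to the diagonal and the graph of the forward (resp.\ backward) bicharacteristic flow of $X$. Taking Laurent residues and adding adjoints, $\Pi$ acts microlocally like a pseudodifferential operator of order $-1$ on $SM$ transverse to the flow, with a flow-invariant correction. Sandwiching with $\pi_m^*$ and $\pi_{m*}$ annihilates this flow-invariant part, because $\pi_{m*}$ integrates along the sphere fibers $S_xM$, which are transverse to $X$; hence $\Pi_m$ is pseudodifferential of order $-1$ on $M$ acting on $S^m T^*M$. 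The principal symbol $\sigma(\Pi_m)(x,\xi):S^mT_x^*M \to S^mT_x^*M$, computed by a stationary-phase/residue calculation, is a nonnegative self-adjoint endomorphism whose kernel coincides with the image of symmetric multiplication by $\xi$; its orthogonal complement is the symbolic kernel of $D^*$. Combining ellipticity there with a Helmholtz-type decomposition at the symbol level produces pseudodifferential operators $Q$, $S$, $R$ of orders $1$, $-2$, $-\infty$ satisfying $Q\Pi_m = \mathrm{Id} + DSD^* + R$.

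\emph{For (B),} although $\Pi$ is a priori only bounded $H^s(SM) \to H^r(SM)$ for $s>0,r<0$, the mapping property improves on the image of solenoidal tensors under $\pi_m^*$. By the symbol analysis in (A), $\pi_m^*$ applied to a solenoidal input avoids the directions where $\Pi$ fails to be pseudodifferential, so the composition $\Pi \pi_m^*$ remains bounded $H^{-s} \cap \ker D^* \to H^{-s}(SM)$. Using density of smooth solenoidal tensors, it suffices to establish the estimate $\|\Pi\pi_m^* f\|_{H^{-s}(SM)} \lesssim \|f\|_{H^{-s}(M;S^mT^*M)}$ for $f\in C^\infty\cap \ker D^*$; this follows by testing against elements of $H^s(SM)$ and using self-adjointness of $\Pi$ together with the boundedness of $\pi_m^*:H^s\to H^s$ from \eqref{contpullback}.

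\emph{For (C),} assume $f\in H^{-s}(M;S^m T^*M)\cap \ker D^*$ satisfies $\cjg \pi_m^*f,1\cjd_{L^2(SM)}=0$ and $\Pi\pi_m^* f = 0$. Pushing forward yields $\Pi_m f = 0$, whence the parametrix gives $f = -DSD^*f - Rf = -Rf$, and $f\in C^\infty(M;S^mT^*M)$ by the smoothing property of $R$. Theorem \ref{th:gu1} then produces $u\in C^\infty(SM)$ with $\pi_m^* f = Xu$; integrating along any closed geodesic $\gamma_c$ forces $I_m f(c) = 0$, and solenoidal injectivity of $I_m$ (Proposition \ref{injIm}) yields $f=0$.

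The main obstacle is (A): identifying $\Pi_m$ as a genuine pseudodifferential operator requires the precise microlocal description of the resolvent kernels from \cite{fs,dz} and the key cancellation that the flow-invariant component of the wavefront set of $\Pi$ is killed by the fiber integration $\pi_{m*}$. Step (B) is delicate for the same reason, as $\Pi$ itself is not a bounded operator on negative Sobolev spaces; the argument must exploit that solenoidal inputs avoid, at the symbol level, the problematic flow directions.
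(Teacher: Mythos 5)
This proposition is quoted from \cite[Theorem 3.5, Lemma 3.6]{gu}; the paper itself supplies no proof, so you are reconstructing the argument of that reference. Your parts (A) and (C) follow its architecture correctly in outline: the principal symbol of $\Pi_m$ at $(x,\xi)$ is, up to constants, integration of $(\otimes^m v)\langle \otimes^m v,\cdot\rangle$ over the great sphere $S_xM\cap\ker\xi$, whose kernel on $S^mT_x^*M$ is exactly the range of the symbol of $D$, giving the parametrix identity; and the deduction of injectivity from the parametrix, Theorem \ref{th:gu1} and solenoidal injectivity of $I_m$ is sound. One caveat on (A): the wavefront set of the kernel of $R_0$ contains, besides the conormal to the diagonal and the flow-out, a component in $E_u^*\times E_s^*$, and showing that this part and the flow-out contribute only smoothing terms after sandwiching by ${\pi_m}_*(\cdot)\pi_m^*$ (because $E_u^*$, $E_s^*$ and the flowed-out horizontal covectors meet the conormals to the fibers of $SM\to M$ only at zero, which uses absence of conjugate points) is the main technical content of the cited theorem, not a one-line transversality remark.

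The genuine gap is in (B). The duality argument you give is circular: writing $\langle \Pi\pi_m^*f,\psi\rangle=\langle f,{\pi_m}_*\Pi\psi\rangle$ and trying to bound this by $\|f\|_{H^{-s}}$ requires ${\pi_m}_*\Pi:H^s(SM)\to H^s(M;S^mT^*M)$ to be bounded, i.e.\ a gain of $2s$ derivatives over the known mapping property $\Pi:H^s\to H^{-s}$; this is precisely the transpose of the statement you are trying to prove and does not follow from the boundedness of $\pi_m^*$ on $H^s$ in \eqref{contpullback}. Estimating instead by $\|\pi_m^*f\|_{H^s}\|\Pi\psi\|_{H^{-s}}$ only reproduces the trivial bound $H^s\to H^{-s}$. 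The heuristic that ``solenoidal inputs avoid the problematic flow directions at the symbol level'' is also not the operative mechanism: for any $f\in H^{-s}$, solenoidal or not, the wavefront set of $\pi_m^*f$ lies in the covectors conormal to the fibers of $SM\to M$, so the solenoidal condition cannot be what produces the continuity. What is actually needed is a direct analysis of $R_0\pi_m^*$ and $R_0^*\pi_m^*$ using the anisotropic spaces and radial estimates of \cite{fs,dz}, together with the geometric fact that $E_u^*$ and $E_s^*$ contain no nonzero covectors conormal to the fibers, so that the negative-regularity input sits in spaces on which the resolvent at $\lambda=0$ acts boundedly. As written, step (B) asserts the conclusion rather than proving it.
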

This results implies the following stability estimates.
\begin{lemm}
\label{lem:fun2}
Assume that $I_m$ is solenoidal injective in the sense \eqref{soleninj}.
For all $s > 0$, there exists a constant $C > 0$ depending on $g_0, s$ such that for all 
$ f \in H^{-s}(M;S^m T^* M)\cap \ker D^*$:
\begin{equation}  \label{eq:fun2} 
 \|f\|_{H^{-s-1}(M)} \leq C (\|\Pi \pi_m^* f\|_{H^{-s}(M)}+|\cjg \pi_m^*f,1\cjd_{L^2(SM)}|) 
\end{equation}
\end{lemm}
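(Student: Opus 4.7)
The plan is to combine the elliptic parametrix for $\Pi_m$ from Proposition \ref{th:gu2} with a standard compactness/injectivity bootstrap. Starting from the identity $Q\Pi_m = \mathrm{Id} + DSD^* + R$ and using $D^*f = 0$, one has $f = Q\Pi_m f - Rf$. Since $Q$ is pseudodifferential of order $1$, $R$ is smoothing, ${\pi_m}_*$ is bounded $H^{-s}(SM) \to H^{-s}(M;S^mT^*M)$ by \eqref{contpullback}, and $\Pi_m = {\pi_m}_* \Pi \pi_m^*$, this yields the a priori estimate
\begin{equation}\label{ellipticstep}
\|f\|_{H^{-s-1}(M)} \leq C\|\Pi\pi_m^* f\|_{H^{-s}(SM)} + C\|f\|_{H^{-s-2}(M)},
\end{equation}
valid for every $f \in H^{-s}(M;S^mT^*M) \cap \ker D^*$.

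It remains to absorb the second term on the right of \eqref{ellipticstep} at the cost of adding $|\langle \pi_m^* f, 1\rangle_{L^2(SM)}|$. I argue by contradiction: suppose the lemma fails, so there exists a sequence $(f_n) \subset H^{-s}(M;S^m T^*M) \cap \ker D^*$ satisfying $\|f_n\|_{H^{-s-1}(M)} = 1$ while $\|\Pi\pi_m^* f_n\|_{H^{-s}(SM)} + |\langle \pi_m^* f_n, 1\rangle_{L^2(SM)}| \to 0$. Estimate \eqref{ellipticstep} then forces $\|f_n\|_{H^{-s-2}(M)} \geq c > 0$ for $n$ large. By the compact embedding $H^{-s-1}(M) \hookrightarrow H^{-s-2}(M)$ on the closed manifold $M$, a subsequence converges strongly in $H^{-s-2}(M)$ to some $f_\infty$ with $\|f_\infty\|_{H^{-s-2}(M)} \geq c$. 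By continuity of $D^*$ and $\pi_m^*$, and by the boundedness of $\Pi \pi_m^*$ on $H^{-\sigma}(M) \cap \ker D^*$ for any $\sigma > 0$ from Proposition \ref{th:gu2} (applied at $\sigma = s+2$), the limit satisfies $D^* f_\infty = 0$, $\Pi\pi_m^* f_\infty = 0$, and $\langle \pi_m^* f_\infty, 1\rangle_{L^2(SM)} = 0$. The injectivity clause of Proposition \ref{th:gu2} (which uses the solenoidal injectivity hypothesis on $I_m$) then forces $f_\infty = 0$, contradicting $\|f_\infty\|_{H^{-s-2}(M)} \geq c > 0$.

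The only real obstacle is the second paragraph: one must verify that every operator and pairing appearing in the limiting argument is still well-defined and continuous on the slightly weaker space $H^{-s-2}$, so that the strong $H^{-s-2}$-limit $f_\infty$ indeed lies in the subspace $\ker D^* \cap \{\langle \pi_m^*\cdot,1\rangle_{L^2} = 0\}$ on which Proposition \ref{th:gu2} asserts injectivity. This flexibility is built into the microlocal construction of $\Pi$ recalled in Section 3, which provides boundedness and injectivity of $\Pi\pi_m^*$ uniformly across the whole negative Sobolev scale. Once this is in place, the proof is a Peetre-type compactness/injectivity argument of the kind standard in inverse problems.
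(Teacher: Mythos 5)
Your proof is correct and follows essentially the same route as the paper: the elliptic parametrix $Q\Pi_m=\mathrm{Id}+DSD^*+R$ applied to solenoidal $f$, followed by a Peetre-type compactness/injectivity contradiction resting on the boundedness and injectivity of $\Pi\pi_m^*$ from Proposition \ref{th:gu2}. The only cosmetic difference is that the paper extracts the limit via compactness of the smoothing remainder $R$ (so $f_n$ is Cauchy in $H^{-s-1}$), whereas you use the Rellich embedding $H^{-s-1}\hookrightarrow H^{-s-2}$ and invoke injectivity one Sobolev level lower; both are legitimate since Proposition \ref{th:gu2} holds on the whole negative Sobolev scale.
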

\begin{proof}
This is actually a consequence of Proposition \ref{th:gu2}. We know that there exist pseudodifferential operators $Q, S, R$ of respective order $1, -2, -\infty$ on $M$ such that:
\[Q\Pi_m = \text{Id} + DSD^* + R.\]
For each $f \in H^{-s}(M;S^m T^* M)$ with $D^*f=0$, we have $\Pi \pi_2^* f \in H^{-s}(SM)$ by Theorem \ref{th:gu2}. Then then exists $C>0$ (which may change from line to line) such that for all such $f$
\[Â \begin{split} 
\|f\|_{H^{-s-1}} & \leq C(\|Q\Pi_mf\|_{H^{-s-1}} + \|Rf\|_{H^{-s-1}})  \leq 
C(\|{\pi_m}_*\Pi\pi_m^* f\|_{H^{-s}} + \|Rf\|_{H^{-s-1}}) \\ & \leq C(||\Pi \pi_m^*f\|_{H^{-s}} + \|Rf\|_{H^{-s-1}}). 
\end{split} \]
where we used \eqref{contpullback} and the boundedness of pseudodifferential operators on Sobolev spaces. The proof now boils down to a standard argument of functional analysis. Assume (\ref{eq:fun2}) does not hold. Then, one can find a sequence of tensors $f_n \in H^{-s}(M; S^m T^* M)\cap \ker D^*$, such that $\|f_n\|_{H^{-s-1}}=1$ and thus:
\[1 = \|f_n\|_{H^{-s-1}} \geq n(\|\Pi\pi^*_mf_n\|_{H^{-s}}+|\cjg \pi_m^*f_n,1\cjd_{L^2}|),\] 
that is $\Pi \pi^*_m f_n \rightarrow_{n \rightarrow \infty} 0$ in $H^{-s}$ (and thus in particular in $H^{-s-1}$) and $\cjg \pi_m^*f_n,1\cjd_{L^2}\to 0$. Since $R$ is compact and $(f_n)_{n \in \N}$ is bounded in $H^{-s-1}$, we can assume (up to extraction) that $Rf_n \rightarrow_{n \rightarrow \infty} v \in H^{-s-1}$. By the previous inequality, we deduce that $(f_n)_{n \in \N}$ is a Cauchy sequence in $H^{-s-1}$, which thus converges to an element $f \in H^{-s-1}(M;S^m T^* M)\cap \ker D^*$ such that $\|f\|_{H^{-s-1}}=1$ and $\cjg \pi_m^*f,1\cjd_{L^2}=0$. The operator $\Pi \pi_m^* : H^{-s-1} \rightarrow H^{-s-1}$ is bounded by Proposition \ref{th:gu2} 
so $\Pi \pi_m^* f_n \rightarrow_{n \rightarrow \infty} 0 = \Pi \pi_m^* f$ and it is also injective on 
$\ker D^*\cap \{f; \cjg \pi_m^*f,1\cjd_{L^2}=0\}$ so $f \equiv 0$. This is a contradiction.
\end{proof}
\begin{rema}
With a bit more work, we can actually get a better estimate with a ${-(s+1/2)}$ Sobolev exponent on the left-hand side of \eqref{eq:fun2}. 
\end{rema}

\section{Proofs of the main results}

As before, we fix a smooth Riemannian manifold $(M,g_0)$ with Anosov flow and will shall consider 
metrics $g$ with regularity $C^{N,\alpha}$ for some $N\geq 3, \alpha>0$ to be determined later and
 such that $\|g-g_0\|_{\mathcal{C}^{N,\alpha}} < \eps$, for some $\eps>0$ small enough so that 
 $g$ also has Anosov flow.

\subsection{Reduction of the problem}

The metric $g_0$ is divergence-free with respect to itself: $D^*g_0 = -{\rm Tr}(\nabla g_0) = 0$, 
where the Levi-Civita connection $\nabla$ and trace ${\rm Tr}$ are defined with respect to $g_0$. By a standard argument, there is a slice transverse to the diffeomorphism action
$(\phi,g)\mapsto \phi^*g$ at the metric $g_0$; here $\phi$ varies in the group  
of $C^{N,\alpha}$-diffeomorphisms on $M$ homotopic to the identity. 
We shall write ${\rm Diff}_0^{N,\alpha}(M)$ for the group of $C^{N,\alpha}(M)$ diffeomorphisms homotopic to ${\rm Id}$, with $N\geq 2,\alpha\in(0,1)$. 
Since $\mc{L}(\phi^*g_0)=\mc{L}(g_0)=\textbf{1}$ for all $\phi\in {\rm Diff}^{N,\alpha}_0(M)$, it suffices to work on that transverse slice to study the marked length spectrum. This is the content of the following: 
\begin{lemm}{\cite[Theorem 2.1]{cds}}
\label{lemm:reduction}
Let $N\geq 2$ be an integer, $\alpha\in (0,1)$. For any $\delta > 0$ small enough, there exists $\eps > 0$ such that for any $g$ satisfying $\|g-g_0\|_{C^{N,\alpha}} < \eps$, there exists $\phi\in {\rm Diff}^{N,\alpha}_0(M)$ that is $C^{N,\alpha}$ close to ${\rm Id}$ such that $g' := \phi^*g$ 
is divergence-free with respect to the metric $g_0$ and $\|g'-g_0\|_{C^{N,\alpha}}< \delta$.
\end{lemm}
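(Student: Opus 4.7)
The plan is to apply an implicit function theorem in H\"older spaces to the ``slice map''
\[F(\phi,g):=D^{*}(\phi^{*}g),\]
defined on a neighbourhood of $(\mathrm{Id},g_0)$ inside ${\rm Diff}^{N+1,\alpha}_0(M)\times C^{N,\alpha}(M;S^2T^*M)$ with values in $C^{N-1,\alpha}(M;T^*M)$, where $D^{*}$ is the $g_0$-divergence on symmetric tensors. Since $D^{*}g_0=0$, one has $F(\mathrm{Id},g_0)=0$, so the goal is to solve $F(\phi,g)=0$ with $\phi$ close to $\mathrm{Id}$ for each $g$ close to $g_0$.

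First I would compute the $\phi$-linearisation of $F$ at $(\mathrm{Id},g_0)$. Writing $\phi_t$ for the time-$t$ flow of a vector field $V$ and differentiating at $t=0$ gives
\[\partial_{\phi}F|_{(\mathrm{Id},g_0)}(V) \;=\; D^{*}\mathcal{L}_{V}g_0 \;=\; 2\,D^{*}DV^{\flat},\]
where $V^{\flat}$ is the $g_0$-dual $1$-form of $V$. The operator $D^{*}D$ on $1$-forms is elliptic of order $2$, formally self-adjoint, and non-negative, with $\langle D^{*}Dp,p\rangle_{L^2}=\|Dp\|_{L^2}^{2}$; hence its kernel consists of $1$-forms $p$ with $Dp=0$, i.e.\ of duals of Killing vector fields of $g_0$. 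The next step is to rule out such fields on Anosov manifolds: using the identity $\pi_2^{*}(Dp)=X\pi_1^{*}p$, any $p\in\ker D$ satisfies $X(\pi_1^{*}p)=0$, so $\pi_1^{*}p$ is flow-invariant on $SM$; by ergodicity of the Anosov flow it is constant, and since $\pi_1^{*}p(x,v)$ is linear in the fibre variable it must vanish identically. Combined with Schauder theory, this shows that $D^{*}D\colon C^{N+1,\alpha}(M;T^*M)\to C^{N-1,\alpha}(M;T^*M)$ is an isomorphism.

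The Banach-space implicit function theorem then produces a $C^{1}$ map $g\mapsto\phi_g$ on some $C^{N,\alpha}$-neighbourhood of $g_0$, whose values are diffeomorphisms $C^{N+1,\alpha}$-close (hence isotopic) to $\mathrm{Id}$ and which satisfy $F(\phi_g,g)\equiv 0$, i.e.\ $g':=\phi_g^{*}g$ is $g_0$-divergence free. Continuity of $g\mapsto \phi_g$ and of the pull-back ensures $\|g'-g_0\|_{C^{N,\alpha}}\to 0$ as $g\to g_0$, which gives the quantitative bound $\|g'-g_0\|_{C^{N,\alpha}}<\delta$ upon shrinking $\eps$.

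I expect the main technical subtlety to be the bookkeeping of regularity: the pull-back $(\phi,g)\mapsto\phi^{*}g$ loses one derivative on $\phi$, so one has to work with $\phi$ one degree smoother than $g$ at the intermediate step and then argue that the constructed $\phi_g$ still belongs to the advertised space ${\rm Diff}^{N,\alpha}_0(M)$. This clean Banach-space argument is precisely what is carried out in \cite[Theorem 2.1]{cds}, whose statement I would simply invoke.
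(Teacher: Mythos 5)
Your outline is the standard slice-theorem argument (implicit function theorem applied to $\phi\mapsto D^*(\phi^*g)$, with invertibility of $D^*D$ on $1$-forms coming from the absence of Killing fields for an Anosov metric), which is exactly the proof of \cite[Theorem 2.1]{cds}; the paper itself gives no proof and simply invokes that reference, as you also propose to do. The one point you defer — H\"older-space differentiability of the pull-back map, which strictly speaking only holds into $C^{N-1,\alpha'}$ for $\alpha'<\alpha$ and requires an elliptic bootstrap or a contraction-mapping reformulation — is handled in the cited theorem, so the proposal is fine.
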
 
We introduce $f:=\phi^*g-g_0\in C^{N,\alpha}(M;S^2T^*M)$, which is, by construction, divergence-free and satisfies $\|f\|_{C^{N,\alpha}} < \delta$. Our goal will be to prove that $f \equiv 0$, if $\delta$ is chosen small enough and $L_g=L_{g_0}$.

\subsection{Geometric estimates}
We let $g,g_0$ be two $C^3$-metrics with Anosov geodesic flow. 
\begin{lemm}
\label{lem:geo1}
Assume that $L_{g}(c)\geq L_{g_0}(c)$ for each $c\in \mc{C}$. If 
$\gamma_c$ denotes the unique geodesic freely homotopic to $c$ for $g_0$, then 
\[ I_2^{g_0}f(c)=\int_{\gamma_c} \pi_2^*f \geq 0.\]
\end{lemm}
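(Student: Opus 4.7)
\medskip

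The plan is to compare two natural curves in the free-homotopy class $c$: the $g_0$-geodesic $\gamma_c$ and the $g$-geodesic $\gamma_c^g$. The assumption $L_g(c)\geq L_{g_0}(c)$ gives information about the $g$-length of the $g$-geodesic, while the integral $\int_{\gamma_c}\pi_2^*f$ naturally involves the $g$-length of the $g_0$-geodesic computed to first order in $f=g-g_0$. The bridge between these will be a minimization property.

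First I would parametrize $\gamma_c:[0,L_{g_0}(c)]\to M$ by $g_0$-arclength, so that $g_0(\dot\gamma_c,\dot\gamma_c)\equiv 1$. Writing $f=g-g_0$, the $g$-length of $\gamma_c$ is
\[
\ell_g(\gamma_c)=\int_0^{L_{g_0}(c)}\sqrt{1+f_{\gamma_c(t)}(\dot\gamma_c(t),\dot\gamma_c(t))}\,dt.
\]
Since $g>0$ pointwise, the radicand is positive, and the concavity bound $\sqrt{1+x}\leq 1+x/2$ gives
\[
\ell_g(\gamma_c)\leq L_{g_0}(c)+\tfrac{1}{2}\int_0^{L_{g_0}(c)}f(\dot\gamma_c,\dot\gamma_c)\,dt
\;=\;L_{g_0}(c)+\tfrac{1}{2}\int_{\gamma_c}\pi_2^*f.
\]

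Second, I would invoke the fact that since $g$ is Anosov, it has no conjugate points (cited in Section~2). On the universal cover, geodesics of a no-conjugate-point metric are globally length-minimizing between their endpoints; projecting, the closed $g$-geodesic $\gamma_c^g$ realizes the infimum of $g$-length among all loops in the free-homotopy class $c$. In particular, comparing it to $\gamma_c$ (which is such a loop) yields
\[
L_g(c)=\ell_g(\gamma_c^g)\leq \ell_g(\gamma_c).
\]

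Combining the two bounds with the hypothesis $L_g(c)\geq L_{g_0}(c)$, I get
\[
L_{g_0}(c)\;\leq\; L_g(c)\;\leq\;\ell_g(\gamma_c)\;\leq\; L_{g_0}(c)+\tfrac{1}{2}\int_{\gamma_c}\pi_2^*f,
\]
from which $\int_{\gamma_c}\pi_2^*f\geq 0$, and hence $I_2^{g_0}f(c)\geq 0$ (the two quantities differ only by the positive factor $1/L_{g_0}(c)$). The only non-trivial input is the global length-minimizing property of closed geodesics in free-homotopy classes under the no-conjugate-point hypothesis; everything else is a one-line Taylor estimate. I expect no real obstacle here beyond cleanly citing that minimization property.
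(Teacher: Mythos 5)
Your proof is correct and follows essentially the same route as the paper: both arguments hinge on the same key fact that the unique closed $g$-geodesic minimizes $g$-length in its free-homotopy class (so $L_g(c)\leq \ell_g(\gamma_c)$), combined with the hypothesis $L_g(c)\geq L_{g_0}(c)$. The only cosmetic difference is that the paper bounds $\int_{\gamma_c}\pi_2^*f=E_g(\gamma_c)-L_{g_0}(c)$ from below via Cauchy--Schwarz on the energy functional, whereas you use the pointwise inequality $\sqrt{1+x}\leq 1+x/2$; these are interchangeable one-line estimates.
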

\begin{proof}
We denote by $\gamma'_c$ the $g$-geodesic in the free-homotopy class $c$. One has:
\[ \int_{\gamma_c} \pi_2^*f=\int_{\gamma_c} \pi_2^*g-\int_{\gamma_c}\pi_2^*g_0=E_g(\gamma_c)-L_{g_0}(c),\]
where $E_g(\gamma_c)=\int_{0}^{\ell_{g_0}(\gamma_c)}g_{\gamma_c(t)}(\dot{\gamma}_c(t),\dot{\gamma}_c(t))dt$ is the energy functional for $g$. By using Cauchy-Schwartz, $E_g(\gamma_c)\geq \ell_g(\gamma_c)^2/\ell_{g_0}(\gamma_c)$ and
since $\gamma_c$ is freely-homotopic to $c$, we get $\ell_g(\gamma_c)\geq \ell_g(\gamma'_c)$. Since $\ell_g(\gamma'_c)\geq \ell_{g_0}(\gamma_c)$ by assumption, we obtain the desired inequality.
\end{proof}

Next, we can use the following result 
\begin{lemm}
\label{lem:geo2}
There exists $\eps>0$ small such that if $\|g-g_0\|_{C^0}\leq \eps$ and 
${\rm Vol}_{g}(M)\leq {\rm Vol}_{g_0}(M)$, then with $f=g-g_0$
\[ \int_{SM}\pi_2^*f \, d\mu\leq \frac{2}{3}\|f\|_{L^2}^2.\]
\end{lemm}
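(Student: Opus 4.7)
The plan is to reduce the integral on the left to the pointwise $g_0$-trace of $f$ by fiber integration, and then extract a quadratic control on that trace from the volume inequality by expanding $\Vol_g$ to second order in $f$.

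Step 1 (fiber integration). At each $x\in M$, the fiber $S_xM$ is a round $(n-1)$-sphere in $(T_xM,g_0)$, so by orthogonal invariance $\int_{S_xM} v^iv^j\,dS_{g_0}(v)$ is a positive multiple of $\delta^{ij}$ in any $g_0$-orthonormal basis. Hence $\int_{S_xM} f_x(v,v)\,dS_{g_0}(v) = c_n\,\mathrm{tr}_{g_0}(f)(x)$ for some fixed $c_n>0$, and disintegrating the Liouville measure as fiber measure times base volume yields
\[ \int_{SM}\pi_2^*f\,d\mu \;=\; \kappa_n \int_M \mathrm{tr}_{g_0}(f)\,d\Vol_{g_0} \]
for a dimensional constant $\kappa_n>0$ fixed by the normalization of $\mu$.

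Step 2 (second-order volume expansion). Set $A := g_0^{-1}f$ so that $g_0^{-1}g = I+A$ and, pointwise in any $g_0$-orthonormal frame, $\mathrm{tr}(A) = \mathrm{tr}_{g_0}(f)$ and $\mathrm{tr}(A^2) = |f|_{g_0}^2$. Using $\det(I+A) = 1 + \mathrm{tr}(A) + \tfrac12(\mathrm{tr}(A)^2 - \mathrm{tr}(A^2)) + O(|A|^3)$ together with $\sqrt{1+u} = 1 + u/2 - u^2/8 + O(u^3)$, one obtains
\[ \sqrt{\det(I+A)} \;=\; 1 + \tfrac12 \mathrm{tr}_{g_0}(f) + \tfrac18 \mathrm{tr}_{g_0}(f)^2 - \tfrac14 |f|_{g_0}^2 + O(|f|_{g_0}^3), \]
with remainder controlled uniformly by $\|f\|_{C^0}|f|_{g_0}^2 \leq \eps|f|_{g_0}^2$. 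Integrating against $d\Vol_{g_0}$ and invoking the hypothesis $\Vol_g(M) \leq \Vol_{g_0}(M)$, then discarding the manifestly nonnegative term $\tfrac18\int_M \mathrm{tr}_{g_0}(f)^2\,d\Vol_{g_0}$, yields
\[ \int_M \mathrm{tr}_{g_0}(f)\,d\Vol_{g_0} \;\leq\; \tfrac12\|f\|_{L^2}^2 + C\eps\|f\|_{L^2}^2 \]
for a constant $C=C(g_0)$.

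Step 3 (combining). Substituting into the identity of Step 1,
\[ \int_{SM}\pi_2^*f\,d\mu \;\leq\; \bigl(\tfrac{\kappa_n}{2} + C'\eps\bigr)\|f\|_{L^2}^2, \]
and since $\kappa_n/2 \leq 1/2$ in the natural normalization, choosing $\eps$ small enough that $C'\eps \leq 1/6$ gives the claimed bound by $\tfrac23\|f\|_{L^2}^2$. The main subtlety is just the accurate second-order Taylor expansion of $\sqrt{\det(I+A)}$: the linear term $\tfrac12\mathrm{tr}_{g_0}(f)$ is the classical first variation of volume, but it is the negative quadratic correction $-\tfrac14|f|_{g_0}^2$ that lets the volume inequality be converted into a quadratic rather than merely linear bound on $\int\mathrm{tr}_{g_0}(f)$. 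No further analytic ingredient is needed.
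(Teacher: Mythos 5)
Your proof is correct and follows essentially the same route as the paper, which also reduces $\int_{SM}\pi_2^*f\,d\mu$ to $\int_M \mathrm{Tr}_{g_0}(f)\,d\mathrm{vol}_{g_0}$ and then Taylor-expands $\mathrm{Vol}_{g_0+\tau f}(M)$ to second order (citing \cite[Proposition 4.1]{cds} for that computation, which you carry out explicitly and correctly, including the crucial $-\tfrac14|f|_{g_0}^2$ term). The only loose point is the normalization constant $\kappa_n$ relating the two integrals (the paper takes it to be $1$ under its normalization of $\mu$), but since the factor $2/3$ is never used sharply downstream --- only a bound of the form $C\|f\|_{L^2}^2$ is needed --- this is immaterial.
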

\begin{proof}
Let $g_\tau := g_0 + \tau f$ with $f\in C^3(M;S^2T^*M)$.
A direct computation gives that $\int_{M}{\rm Tr}_{g_0}(f){\rm dvol}_{g_0}=\int_{SM}\pi_2^*f \, d\mu$. Then the argument of \cite[Proposition 4.1]{cds} by Taylor expanding ${\rm Vol}_{g_\tau}(M)$
in $\tau$ directly provides the result.
\end{proof}
Finally, we conclude this section with the following: 
\begin{lemm}
\label{lem:geo3} Assume that $I^{g_0}_2f(c)\geq 0$ for all $c\in \mc{C}$. Then,
there exists a constant $C=C(g_0) > 0$, such that:
\begin{equation}\label{boundonintf}
Â 0 \leq \int_{SM} \pi_2^*f \, d\mu \leq C\left(\|\mc{L}(g) - \mathbf{1}\|_{\ell^{\infty}(\mc{C})} +  \|f\|^2_{C^3(M)}\right) 
\end{equation}
where $d\mu$ is the Liouville measure of $g_0$ and $g=g_0+f$ as above.
\end{lemm}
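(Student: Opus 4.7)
The plan is to combine the second-order Taylor expansion of $\mc{L}$ from Proposition \ref{LisC2} with an equidistribution argument for closed geodesics, in order to transfer a pointwise estimate on $I_2^{g_0} f$ into an integral estimate against Liouville measure.

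First I would observe that the linearisation $D\mc{L}(g_0)$ is precisely $\tfrac{1}{2} I_2^{g_0}$ by the formula given at the start of Section \ref{Xray}. Hence Proposition \ref{LisC2} applied to $g = g_0 + f$ yields, uniformly in $c \in \mc{C}$,
\[
\tfrac{1}{2} I_2^{g_0} f(c) = \mc{L}(g)(c) - 1 + \mathcal{O}\bigl(\|f\|^2_{C^3(M)}\bigr),
\]
which, together with the hypothesis $I_2^{g_0} f(c) \geq 0$, gives the uniform two-sided bound
\[
0 \leq I_2^{g_0} f(c) \leq 2\|\mc{L}(g) - \mathbf{1}\|_{\ell^\infty(\mc{C})} + C\|f\|^2_{C^3(M)} \quad \text{for all } c \in \mc{C}.
\]

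The second and decisive step is to relate the supremum of $I_2^{g_0} f$ over $\mc{C}$ to the Liouville integral $\int_{SM} \pi_2^* f \, d\mu$. Let $\mu_c$ denote the normalised $g_0$-arc-length measure on the closed geodesic $\gamma_c$, so that $I_2^{g_0} f(c) = \int_{SM} \pi_2^* f \, d\mu_c$. For Anosov flows, Sigmund's theorem (obtained from Bowen's specification property / the Anosov closing lemma) ensures that the set of periodic orbit measures $\{\mu_c\}_{c \in \mc{C}}$ is weak-$*$ dense in the compact convex set of flow-invariant Borel probability measures on $SM$. In particular, since the geodesic flow preserves the normalised Liouville measure $\mu/\mu(SM)$, one can select a sequence $(c_n)_{n\in\nn}$ with $\mu_{c_n} \rightharpoonup \mu/\mu(SM)$. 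As $\pi_2^* f \in C^0(SM)$, we obtain
\[
I_2^{g_0} f(c_n) = \int_{SM} \pi_2^* f \, d\mu_{c_n} \xrightarrow[n\to\infty]{} \frac{1}{\mu(SM)} \int_{SM} \pi_2^* f \, d\mu.
\]

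Passing the two-sided bound on $I_2^{g_0} f(c_n)$ to the limit and absorbing the constant $\mu(SM) = \mu(SM; g_0)$ into $C = C(g_0)$ yields exactly \eqref{boundonintf}. I expect the main conceptual point to be the invocation of Sigmund density, which replaces the potentially more involved quantitative equidistribution statements; everything else is a straightforward consequence of Proposition \ref{LisC2} and the hypothesis.
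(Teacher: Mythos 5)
Your proof is correct. The first step (identifying $D\mc{L}_{g_0}=\tfrac12 I_2^{g_0}$ and using Proposition \ref{LisC2} to get the uniform bound $0\leq I_2^{g_0}f(c)\leq 2\|\mc{L}(g)-\mathbf{1}\|_{\ell^\infty}+C\|f\|_{C^3}^2$) is exactly what the paper does. Where you diverge is in the equidistribution step: the paper invokes Parry's formula, i.e.\ the weighted average $\frac{1}{N(T)}\sum_{L_{g_0}(c)\leq T}e^{\int_{\gamma_c}J^u}\,I_2^{g_0}f(c)$ converges to $\frac{1}{{\rm Vol}(SM)}\int_{SM}\pi_2^*f\,d\mu$, using that Liouville measure is the equilibrium state of the unstable Jacobian; both inequalities of \eqref{boundonintf} then follow by bounding each nonnegative term of the sum by $\|I_2^{g_0}f\|_{\ell^\infty}$. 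You instead use Sigmund's density of periodic orbit measures in $\mc{M}_1$ to extract a single sequence $\mu_{c_n}\rightharpoonup \mu/\mu(SM)$ and pass the two-sided bound to the limit. Your route is legitimate and arguably softer: it needs only that Liouville measure is flow-invariant (a fact the paper itself records when citing Sigmund just before Proposition \ref{prop:geo4}), not its characterization as an equilibrium state, and the authors explicitly remark after their proof that the approximation could alternatively be done by Birkhoff plus the Anosov closing lemma, which is morally your argument. What Parry's formula buys in exchange is a canonical, weighted average over \emph{all} orbits rather than an abstractly chosen approximating sequence; for the purely qualitative estimate \eqref{boundonintf} this extra structure is not needed, so your version loses nothing.
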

\begin{proof}
For the Anosov geodesic flow of $g_0$, the Liouville measure is the unique equilibrium state associated to the potential given by $J^u(z) := -\partial_t \left( \det d\varphi_t(z)|_{E_u(z)}\right)|_{t=0}$ (the unstable Jacobian). By Parry's formula (see \cite[Paragraph 3]{wp}), we have: 
\be \label{eq:parry} 
\forall F \in C^0(SM), \quad \lim_{T\to \infty}\dfrac{1}{N(T)} \sum_{c \in \mathcal{C}, L_{g_0}(c)\leq T} 
\dfrac{e^{\int_{\gamma_c} J^u }}{L_{g_0}(c)} \int_{\gamma_c} F 
=\frac{1}{{\rm Vol}(SM)}\int_{SM}F\, d\mu,
\ee
where, as before, $\gamma_c$ is the $g_0$-geodesic in $c$ and $N(T)$ is the constant of normalisation corresponding to the sum when $F=1$. The first inequality in \eqref{boundonintf} then follows from that formula and the assumption $I^{g_0}_2f\geq 0$. For the second inequality in \eqref{boundonintf} 
we use Proposition \ref{LisC2} with the fact that $D\mc{L}_{g_0}f=\frac{1}{2} I_2^{g_0}f$ to deduce that there exists $C(g_0)>0$ such that
\begin{equation}\label{tayloronL} 
\|I_2^{g_0}f\|_{\ell^{\infty}(\mc{C})} \leq 2 \|\mc{L}(g) - \mathbf{1}\|_{\ell^{\infty}(\mc{C})} 
+ C(g_0)\|f\|_{C^3}^2.\end{equation}
Thus, we get for any $T > 0$
\begin{equation}\label{rapportI2intf} 
 \dfrac{1}{N(T)} \sum_{c \in \mathcal{C}, L_{g_0}(c)\leq T} 
e^{\int_{\gamma_c} J^u }I_2^{g_0}f(c)\leq \|I_2^{g_0}f\|_{\ell^\infty(\mc{C})} \leq 2 \|\mc{L}(g) - \mathbf{1}\|_{\ell^{\infty}(\mc{C})} + C(g_0)\|f\|_{C^3}^2  \end{equation}
and the left-hand side converges to $\frac{1}{{\rm Vol}(SM)} \int_{SM} \pi_2^*f \, d\mu$ by Parry's formula (\ref{eq:parry}), which is the sought result by letting $T\to \infty$.
\end{proof}
We note that in the previous proof, the approximation of $\int_{SM}\pi_2^*f$ by $I_2^{g_0}f(c)$ could also be done using the Birkhoff ergodic theorem and the Anosov closing lemma to approximate $\int_{SM}\pi_2^*f$ by some $I^{g_0}_2f(c)$ for some $c\in\mc{C}$ so that $L_{g_0}(c)$ is large.

The following lemma is another key ingredient in the proof of our main results. It is a positive version of Livsic theorem which was proved independently by Pollicott-Sharp \cite{ps} and Lopes-Thieullen \cite{lt} (though the stronger version we use is actually that of \cite{lt}). Here $\mathcal{M}_1$ denotes the Borel probability measures on $SM$ which are invariant by the geodesic flow of $g_0$. Note that, by Sigmund \cite{Si}, the Dirac measures $u\mapsto \frac{1}{L_{g_0}(c)}\int_{\gamma_c}u$ on closed orbits are dense in $\mc{M}_1$.
\begin{prop}{\cite[Theorem 1]{lt}, \cite[Theorem 1]{ps}}
\label{prop:geo4}
Let $\alpha \in (0,1]$ and let $X_0$ be the geodesic vector field of $g_0$. There exists a constant 
$C=C(g_0) > 0$ and $\beta\in (0,1)$ such that for any $u \in C^\alpha(SM)$ satisfying
\[\forall c\in  \mathcal{C},\quad \int_{\gamma_c} u  \geq 0,\]
there exists  $h\in C^{\alpha\beta}(SM)$ and $F\in C^{\alpha\beta}(SM)$ such that $F \geq 0$ and  $u + Xh = F$. Moreover 
$\|F\|_{C^{\alpha\beta}} \leq C\|u\|_{C^\alpha}$.
\end{prop}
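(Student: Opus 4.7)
The plan is to follow the strategy of Lopes-Thieullen. The function $h$ is built as a variational quantity whose coboundary $Xh$ corrects $u$ into a non-negative function $F$; the main work is to show that $h$, hence $F$, is Hölder with the advertised bound.

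First, by Sigmund's density theorem for periodic measures in $\mathcal{M}_1$, the hypothesis $\int_{\gamma_c} u \geq 0$ for all $c \in \mathcal{C}$ extends to $\int_{SM} u\, d\mu \geq 0$ for every flow-invariant probability measure $\mu$. After normalising $\|u\|_{C^\alpha} = 1$, the Anosov closing lemma yields the pivotal a priori bound: there exist $\delta_0, C_1 > 0$ such that whenever $d(\varphi_T z, z) < \delta_0$,
\[ \int_0^T u(\varphi_t z)\, dt \;\geq\; - C_1 \, d(\varphi_T z, z)^\alpha, \]
obtained by comparing the orbit segment $\{\varphi_t z\}_{0\le t\le T}$ with the nearby closed orbit $\gamma$ it shadows, and using $\int_\gamma u \geq 0$ together with the $C^\alpha$ regularity of $u$ and the exponential contraction in stable/unstable directions.

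Second, I would reduce to discrete time via a smooth Poincaré section $\Sigma \subset SM$ transverse to $X$, with return map $R$ and return time $\tau \colon \Sigma \to \mathbb{R}_+$. Set
\[ \tilde{u}(z) := \int_0^{\tau(z)} u(\varphi_t z)\, dt, \]
which is Hölder on $\Sigma$. Closed orbits of $\varphi_t$ correspond to periodic orbits of $R$, and $\int_\gamma u$ equals the Birkhoff sum of $\tilde u$ along the corresponding $R$-orbit, so the positivity hypothesis translates to $\sum_{k=0}^{n-1} \tilde u(R^k z) \geq 0$ whenever $R^n z = z$. One then applies the discrete Lopes-Thieullen theorem to the hyperbolic map $R$ and observable $\tilde u$, obtaining $\tilde h, \tilde F$ on $\Sigma$ with $\tilde u + \tilde h \circ R - \tilde h = \tilde F \geq 0$ and $\tilde F \in C^{\alpha\beta}(\Sigma)$ for some $\beta \in (0,1)$ depending only on the hyperbolicity rate. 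Finally, extend $\tilde h$ to a Hölder function on $SM$ by setting, for $z \in \Sigma$ and $0 \leq s < \tau(z)$,
\[ h(\varphi_s z) := \tilde h(z) + \int_0^s u(\varphi_t z)\, dt - \frac{s}{\tau(z)} \, \tilde F(z), \]
which gives $Xh + u = \tilde F(z)/\tau(z) \geq 0$ along each flow fibre; the regularity $h \in C^{\alpha\beta}(SM)$ follows from the Hölder regularity of $\tilde h, \tilde F, \tau$ and $u$.

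The main obstacle is the discrete Lopes-Thieullen theorem itself, where $\tilde h$ is constructed as a supremum of the form
\[ \tilde h(z) := \sup_{n \geq 0,\, y \in \Sigma} \Bigl[ -S_n \tilde u(y) - K \, d(R^n y, z)^{\alpha\beta} \Bigr], \]
the supremum ranging over orbit segments ending near $z$. Finiteness of the supremum uses the closing-lemma bound of the first step. The hard part is showing $\tilde h$ is $\alpha\beta$-Hölder: one compares $\tilde h(z)$ and $\tilde h(z')$ by transporting a near-maximising pair $(n, y)$ at $z$ to $z'$ via the local product structure, and tracks how the Birkhoff sum $S_n \tilde u$ and the penalty term vary under shadowing. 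Long orbits amplify distances exponentially in the unstable direction, so the $\alpha$-modulus of $\tilde u$ and the $\alpha\beta$-penalty must be balanced across all time scales; optimising this balance produces the loss factor $\beta \in (0,1)$, intrinsic to the construction and determined by the contraction rate $\nu$ in \eqref{defAnosov}. Once this is in place, the bound $\|F\|_{C^{\alpha\beta}} \leq C \|u\|_{C^\alpha}$ is tracked through all the constants.
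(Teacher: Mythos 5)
First, note that the paper gives no proof of this proposition: it is quoted verbatim from Lopes--Thieullen \cite{lt} and Pollicott--Sharp \cite{ps}, so the only ``proof'' in the paper is the citation. Your reconstruction has the right overall architecture --- the Anosov closing lemma giving the a priori bound $\int_0^T u(\varphi_t z)\,dt \geq -C_1 d(\varphi_T z,z)^\alpha$ for almost-closed orbit segments, and a Ma\~n\'e-potential/Lax--Oleinik supremum defining the sub-action --- and this is indeed the engine of both cited proofs. However, there are two genuine gaps in the way you pass through a cross-section.

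The first is that a single global smooth Poincar\'e section $\Sigma$ transverse to $X$ and meeting every orbit does not exist for an Anosov geodesic flow: such a section would make the flow a suspension, and contact Anosov flows are never suspensions (for instance, the flip $v\mapsto -v$ is isotopic to the identity on $SM$ and sends each closed orbit to its time-reversal, forcing every closed orbit to have zero intersection number with any closed hypersurface, whereas a global section must be crossed positively). The correct substitute is a finite family of Markov sections \`a la Bowen--Ratner, as in \cite{ps}, but then the return map $R$ and return time $\tau$ are only piecewise H\"older with discontinuities along the rectangle boundaries, and gluing the discrete sub-action into a H\"older function on $SM$ is precisely the technical content you are skipping; alternatively, \cite{lt} avoids sections entirely and runs the sup-construction directly on the flow. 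The second gap is in your extension formula: as written it gives $Xh = u - \tilde F(z)/\tau(z)$, hence $u+Xh = 2u - \tilde F/\tau$, not $\tilde F/\tau$ (the signs of $\int_0^s u$ and $\tfrac{s}{\tau}\tilde F$ must be flipped); and even after correcting the signs, the resulting $F = \tilde F(z)/\tau(z)$ is constant along each orbit segment between consecutive returns and jumps at each return, so it is not continuous on $SM$ --- a function constant along orbit arcs of a transitive flow cannot be continuous unless it is globally constant. Producing an $F$ that is genuinely $C^{\alpha\beta}$ on $SM$, with the norm bound $\|F\|_{C^{\alpha\beta}}\leq C\|u\|_{C^\alpha}$, is exactly the part of \cite{lt,ps} that your reduction does not reproduce.
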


\subsection{Proof of Theorem \ref{th1} and Theorem \ref{th1bis}}
\label{ssect:proofth1}
We fix $g_0$ with Anosov geodesic flow on $M$ and assume that either $M$ is a surface or that $g_0$ has non-positive curvature in order to have
that $I_2^{g_0}$ is solenoidal injective by Proposition \ref{injIm}. Fix $N\geq 3$ to be chosen later and $\alpha>0$ small.
As explained in Lemma \ref{lemm:reduction}, we take $\delta>0$ small and $\eps>0$ 
small so that $\|g-g_0\|_{C^{N,\alpha}}<\eps$ 
implies that there is $\phi\in {\rm Diff}_0^{N,\alpha}(M)$ with 
$\|\phi^*g-g_0\|_{C^{N,\alpha}}<\delta$ and $D^*(\phi^*g-g_0)=0$.

We write $f:=\phi^*g-g_0$ and remark that the assumption $L_{g}\geq L_{g_0}$ implies 
$L_{\phi^*g}\geq L_{g_0}$ thus $I_2^{g_0}f(c)\geq 0$ for all $c\in \mc{C}$ by Lemma \ref{lem:geo1}.
By Proposition \ref{prop:geo4}, we know that there exists $h\in C^{\beta}(SM)$ and $F\in C^{\beta}(SM)$ for some $0<\beta<\alpha$ (depending on $g_0$ and linearly on $\alpha$) such that $\pi_2^*f + Xh = F \geq 0$, with
\be \label{eq:control} 
\|\pi_2^*f + Xh\|_{C^{\beta}}  \leq C\|\pi_2^*f\|_{C^\alpha} 
\leq C\|f\|_{C^\alpha},
\ee
where $C=C(g_0)$. 
Take $0<s\ll \beta$ very small (it will be fixed later) and let $\beta'<\beta$ be very close to $\beta$. Thus we obtain (for some constant $C=C(g_0,s,\beta)$ that may change from line to line)
\begin{equation}\label{seqineq}
\begin{array}{lll} 
\|f\|_{H^{-1-s}} & \leq C(\|\Pi \pi_2^*f\|_{H^{-s}}+|\cjg \pi_2^*f,1\cjd_{L^2}|), & \text{ by Lemma \ref{lem:fun2}} \\ 
& \leq C(\| \Pi (\pi_2^*f + Xh)\|_{H^{-s}}+|\cjg \pi_2^*f + Xh,1\cjd_{L^2}|), &\text{ since } \Pi X h = 0 \\
& \leq C\|\pi_2^*f + Xh\|_{H^s}, &\text{ by Theorem \ref{th:gu1}} \\
& \leq C\|\pi_2^*f + Xh \|_{L^2}^{1-\nu} \|\pi_2^*f + Xh\|_{H^{\beta'}}^{\nu} , & \text{ by interpolation with } \nu = \frac{s}{\beta'} .\end{array}\end{equation}
Note that by (\ref{eq:control}) we have a control:
\begin{equation}\label{boundLoTh} 
\|\pi_2^*f + Xh\|_{H^{\beta'}} \leq C \|\pi_2^*f + Xh\|_{C^{\beta}} \leq C\|f\|_{C^\alpha} .
\end{equation}
And we can once more interpolate between Lebesgue spaces so that:
\begin{equation}\label{ineqpi2*f}
\|\pi_2^*f + Xh\|_{L^2} \leq C\|\pi_2^*f + Xh\|_{L^1}^{1/2} \|\pi_2^*f + X h\|_{L^\infty}^{1/2} 
\leq C\|\pi_2^*f + Xh\|_{L^1}^{1/2} \|f\|_{C^\alpha}^{1/2}.
\end{equation}
Next, using that $\pi_2^*f + Xh\geq 0$, we have 
\beÂ \begin{split} \label{eq:positivity} 
\|\pi_2^*f + Xh\|_{L^1} = \int_{SM} (\pi_2^*f + Xh)d\mu =
 \int_{SM} \pi_2^*f \, d\mu  
 \end{split}\ee
We will now consider two cases: in case (1) we assume that $L_{g}=L_{g_0}$, while in case (2) we assume that ${\rm Vol}_{g}(M)\leq {\rm Vol}_{g_0}(M)$.
Combining Lemma \ref{lem:geo1} and Lemma \ref{lem:geo3}, we deduce that
in case (1),  we have
\[\|\pi_2^*f + Xh\|_{L^1}\leq C\|f\|^2_{C^3},
\]
while in case (2), we get by Lemma \ref{lem:geo2} that if $\eps>0$ is small enough,
\[\|\pi_2^*f + Xh\|_{L^1}\leq C\|f\|^2_{L^2}\]
These facts combined with \eqref{ineqpi2*f}  yield 
\[\|\pi_2^*f + Xh\|_{L^2} \leq \left\{\begin{array}{ll}
C\|f\|_{C^3}.\|f\|_{C^\alpha}^{1/2}, & \textrm{case (1)}\\ 
C\|f\|_{L^2}.\|f\|_{C^\alpha}^{1/2}, & \textrm{case (2)}\end{array}\right.
\]
Thus we have shown 
\begin{equation}\label{boundnonlin} 
\|f\|_{H^{-1-s}}\leq \left\{\begin{array}{ll}
C\|f\|_{C^3}^{1-\nu}\|f\|_{C^\alpha}^{\frac{1+\nu}{2}}, & \textrm{case (1)}\\ 
C\|f\|^{1-\nu}_{L^2}.\|f\|_{C^\alpha}^{\frac{1+\nu}{2}}, & \textrm{case (2)}
\end{array}\right.
\end{equation}
where $C=C(g_0,s,\beta)$.
We choose $\alpha$ very small and $0<s\ll \beta<\alpha$, $j\in\{\alpha,3\}$
and $N_0>n/2+j+s$: by interpolation and Sobolev embedding we have 
\begin{equation}\label{interpol} 
\|f\|_{C^j}Â \leq \|f\|_{H^{n/2+j+s}} \leq C \|f\|^{1-\theta_j}_{H^{-1-s}}\|f\|^{\theta_j}_{H^{N_0}} 
\end{equation}
with $\theta_j=\frac{n/2+j+1+2s}{N_0+s+1}$. If $N_0>\frac{3}{2}n+8$, we see that 
$\gamma:=\demi(1-\theta_\alpha)(1+\nu)+(1-\theta_3)(1-\nu)>1$ if $s>0$ and $\alpha$ are chosen small enough, thus in case (1) we get with $\gamma':=(1+\nu)\theta_\alpha/2+(1-\nu)\theta_3$
\[ \|f\|_{H^{-1-s}}\leq C\|f\|^{\gamma}_{H^{-1-s}}\|f\|_{H^{N_0}}^{\gamma'}\]
Thus if $f\not=0$ we obtain, if $\|f\|_{H^{N_0}}\leq \delta$
\[ 1\leq C\|f\|_{H^{-1-s}}^{\gamma-1}\|f\|_{H^{N_0}}^{\gamma'}\leq C\|f\|_{H^{N_0}}^{\gamma-1+\gamma'}\leq C\delta^{\gamma-1+\gamma'}.
\]
Since $\gamma-1+\gamma'>0$, we see that by taking $\delta>0$ small enough we obtain a contradiction, thus $f=0$. This proves Theorem \ref{th1} by choosing $N\geq N_0$.
In case (2) (corresponding to Theorem \ref{th1bis}), this is the same argument except that we get a slightly better result due to the $L^2$ norm in \eqref{boundnonlin}: $N_0$ can be chosen to be any number $N_0>n/2+2$. To conclude, we have shown that if  
$\|g-g_0\|_{C^{N,\alpha}}<\eps$ for $N\in \nn$ with $N+\alpha>n/2+2$, then $L_g\geq L_{g_0}$ implies that either ${\rm Vol}_{g}(M)\leq {\rm Vol}_{g_0}(M)$ and $\phi^*g=g_0$ for some $C^{N,\alpha}$ diffeomorphism, or  ${\rm Vol}_{g}(M)\geq {\rm Vol}_{g_0}(M)$.
Note that in both cases, if $g$ is smooth then $\phi$ is smooth.

\subsection{Stability estimates for X-ray transforms}

We next give some new stability estimates for the X-ray transform. To the best of our knowledge, these are the first estimates in the closed setting. 
In the following, we will consider the X-ray transform $I_m$ over divergence-free symmetric $m$-tensors.
\begin{theo}
\label{th:stab1}
Assume that $(M,g_0)$ satisfies the assumptions of Theorem \ref{th1}. Then for all $\alpha> 0$, there is $\beta\in(0,\alpha)$ depending  linearly on $\alpha$ such that for all $s\in (0,\beta)$ 
and for all $\nu\in(s/\beta,1)$,  there exists a constant $C > 0$ such that for all 
$f \in C^\alpha(M; S^m T^* M)\cap \ker D^*$:
\[ \|f\|_{H^{-1-s}} \leq C\|I^{g_0}_mf\|_{\ell^\infty}^{(1-\nu)/2} (\|f\|_{C^\alpha}+\|I_m^{g_0}f\|_{\ell^\infty})^{(1+\nu)/2}\]
\end{theo}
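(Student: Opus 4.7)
The plan is to mimic the strategy used in the proof of Theorems \ref{th1}-\ref{th1bis}, but replacing the (failing) positivity $I_m^{g_0} f \geq 0$ by a shift of $\pi_m^* f$ that makes the shifted function have nonnegative integrals along closed orbits. Set $K := \|I_m^{g_0}f\|_{\ell^\infty(\mc{C})}$ and consider $u := \pi_m^* f + K \in C^\alpha(SM)$. Then for every $c \in \mc{C}$,
\[
\int_{\gamma_c} u = L_{g_0}(c)\bigl(I_m^{g_0}f(c) + K\bigr) \geq 0,
\]
so Proposition \ref{prop:geo4} produces $h \in C^{\alpha\beta}(SM)$ and $F \in C^{\alpha\beta}(SM)$ with $F \geq 0$, $u + Xh = F$, and $\|F\|_{C^{\alpha\beta}} \leq C\|u\|_{C^\alpha} \leq C(\|f\|_{C^\alpha} + K)$. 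Equivalently, $\pi_m^* f + Xh = F - K$, a function controlled in $C^{\alpha\beta}$ by the same right-hand side.

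Next I would bound $\|\pi_m^*f + Xh\|_{L^1}$. Since $Xh$ has vanishing Liouville integral and $F\geq 0$,
\[
0 \leq \int_{SM} F\,d\mu = \int_{SM}\pi_m^*f\,d\mu + K\Vol(SM),
\]
and Parry's formula \eqref{eq:parry}, applied exactly as in the proof of Lemma \ref{lem:geo3}, gives $|\int_{SM}\pi_m^*f\,d\mu| \leq \Vol(SM)\cdot\|I_m^{g_0}f\|_{\ell^\infty}$. Therefore $\|F\|_{L^1} \leq 2K\Vol(SM)$ and consequently $\|\pi_m^*f+Xh\|_{L^1} = \|F-K\|_{L^1} \leq 3K\Vol(SM)$. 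Interpolating between $L^1$ and $L^\infty$ (bounded via $C^{\alpha\beta}$) yields
\[
\|\pi_m^*f + Xh\|_{L^2} \leq C\,K^{1/2}\bigl(\|f\|_{C^\alpha} + K\bigr)^{1/2}.
\]

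The rest is the chain of inequalities \eqref{seqineq}: apply Lemma \ref{lem:fun2} to get $\|f\|_{H^{-1-s}} \leq C(\|\Pi\pi_m^*f\|_{H^{-s}} + |\langle \pi_m^*f,1\rangle_{L^2}|)$; use $\Pi X h = 0$ (Theorem \ref{th:gu1}) to replace $\Pi\pi_m^*f$ by $\Pi(\pi_m^*f + Xh)$; use the boundedness $\Pi\colon H^s \to H^{-s}$; then interpolate
\[
\|\pi_m^*f+Xh\|_{H^s} \leq \|\pi_m^*f+Xh\|_{L^2}^{1-\nu}\|\pi_m^*f+Xh\|_{H^{\beta'}}^{\nu},\quad \nu = s/\beta',
\]
with $\|\pi_m^*f+Xh\|_{H^{\beta'}} \leq C\|\pi_m^*f+Xh\|_{C^\beta} \leq C(\|f\|_{C^\alpha}+K)$. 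Finally the integral term $|\langle \pi_m^*f,1\rangle_{L^2}|$ has already been bounded by $C K$ via Parry's formula, and this residual $K$ term is absorbed into $K^{(1-\nu)/2}(\|f\|_{C^\alpha}+K)^{(1+\nu)/2}$, producing the stated estimate.

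The only delicate point is ensuring the argument of Proposition \ref{prop:geo4} applies to $u = \pi_m^* f + K$, which is not of the form $\pi_m^* \tilde f$ for $m\geq 1$; however, Proposition \ref{prop:geo4} is stated for arbitrary $u \in C^\alpha(SM)$, so this causes no difficulty. The choice of Hölder regularity $\alpha$ must be compatible with the solenoidal decomposition (one needs $f^s \in C^\alpha$ as well), but since $f$ is already assumed in $\ker D^*$, no decomposition is required and the estimate is clean.
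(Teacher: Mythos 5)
Your proposal is correct and follows essentially the same route as the paper: shift $\pi_m^*f$ by $\|I_m^{g_0}f\|_{\ell^\infty}$ to invoke the positive Livsic theorem (Proposition \ref{prop:geo4}), bound the $L^1$ norm via Parry's formula, and then run the chain \eqref{seqineq} with the $L^1$--$L^\infty$ and $L^2$--$H^{\beta'}$ interpolations. Your write-up is in fact more detailed than the paper's (which compresses these steps into references to \eqref{boundLoTh}, \eqref{ineqpi2*f}, \eqref{eq:positivity} and \eqref{rapportI2intf}), and your remark that Proposition \ref{prop:geo4} applies to arbitrary H\"older functions on $SM$, not only pullbacks of tensors, correctly addresses the only delicate point.
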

\begin{proof}
The proof is essentially the same as Theorem \ref{th1}. By using Lemma \ref{lem:fun2} with $\pi_m^*f$ replaced by $\pi_m^*f+\|I^{g_0}_mf\|_{\ell^\infty}$ and Proposition \ref{prop:geo4}, we 
have, as in \eqref{seqineq}, that for all $0<\alpha<1$ small, there is $0<\beta<\alpha$ depending on $g_0$ and linearly on $\alpha$ such that for all $0<s<\beta'<\beta$, and for all 
$f\in C^{\alpha}(M;S^mT^*M)\cap \ker D^*$:
\[\begin{split}
\|f\|_{H^{-1-s}} \leq & C(\|\Pi \pi_m^*f\|_{H^{-s}}+|\cjg \pi_m^*f,1\cjd_{L^2}|)
\leq C(\|\Pi(\pi_m^*f+Xh)\|_{H^{-s}}+|\cjg \pi_m^*f+Xh,1\cjd_{L^2}|)\\
\leq  & C\|\pi_m^*f + Xh \|_{L^2}^{1-\nu} \|\pi_m^*f + Xh\|_{H^{\beta'}}^{\nu},
\end{split}\]
for some $C$ depending only on $(g_0,s,\beta,\beta',\alpha)$,  $\nu:=s/\beta'$ and
where $\pi_m^*f+Xh=-\|I_m^{g_0}f\|_{\ell^\infty}+F$ with $h,F\in C^{\beta}$ such that $\|F\|_{C^{\beta}}\leq C(\|f\|_{C^\alpha}+\|I^{g_0}_mf\|_{\ell^\infty})$.
Using \eqref{boundLoTh}, \eqref{ineqpi2*f}, \eqref{eq:positivity}, \eqref{rapportI2intf} with $I_2^{g_0}f$ replaced by $I_m^{g_0}f$, we get the result.
\end{proof}

\begin{rema}\label{kerI2An} Note that $\nu$ and $s$ can be chosen arbitrarily small in the estimate.
In the general case of an Anosov manifold (without any assumption on the curvature), the $s$-injectivity of the X-ray transform is still unknown. However, it was proved in \cite[Theorem 1.5]{DaSh} and \cite[Lemma 3.6]{gu}  that its kernel is finite-dimensional and contains only smooth tensors. The same arguments as above then show that Theorem \ref{th:stab1} still holds for all $f$ as above with the extra condition $f\perp \ker I_m$ with respect to the $L^2$ scalar product, and similarly for Theorem \ref{th1}, if $g$ is not in a finite dimensional manifold.
\end{rema}

\subsection{Stability estimates for the marked length spectrum. Proof of Theorem \ref{th3}}
We will apply the same reasoning as before to get a stability estimate for the non-linear problem (the marked length spectrum).
We proceed as before and reduce to considering $f=\phi^*g-g_0$ where 
$\phi\in {\rm Diff}_0^{N,\alpha}(M)$ and $\|f\|_{C^{N,\alpha}}<\delta$.
By Theorem \ref{th:stab1}, and using \eqref{tayloronL} we have for $0<\alpha$ small, 
$0<s\ll \alpha$ and $\beta,\nu$ as in Theorem \ref{th:stab1} (in particular $\nu,\alpha,s$ can be made arbitrarily small):
\[ \begin{split} 
\|f\|_{H^{-s-1}} & \leq C \|I^{g_0}_2f\|_{\ell^\infty}^{(1-\nu)/2} (\|f\|_{C^\alpha} + \|I^{g_0}_2f\|_{\ell^\infty})^{(1+\nu)/2}  \\
& \leq C(\|\mc{L}(g)-\textbf{1}\|_{\ell^\infty}+\|f\|^2_{C^3})^{(1-\nu)/2}\|f\|_{C^\alpha}^{(1+\nu)/2}+
C(\|\mc{L}(g)-\textbf{1}\|_{\ell^\infty}+\|f\|^2_{C^3})\\
& \leq C \left(\|\mc{L}(g)- \mathbf{1}\|_{\ell^\infty}^{(1-\nu)/2} \|f\|^{(1+\nu)/2}_{C^\alpha} + 
\|\mc{L}(g)- \mathbf{1}\|_{\ell^\infty} + \|f\|^{1-\nu}_{C^3}\|f\|^{(1+\nu)/2}_{C^\alpha} \right)
\end{split} \]
We use the interpolation estimate \eqref{interpol}
and for $N_0>n/2+9$ we get
\be \label{eq:interp1} 
\|f\|_{H^{-s-1}} \leq C \left(\|\mc{L}(g)- \mathbf{1}\|_{\ell^\infty}^{(1-\nu)/2} \|f\|^{(1+\nu)/2}_{C^\alpha} + \|\mc{L}(g)- \mathbf{1}\|_{\ell^\infty}+ \|f\|^{\gamma}_{H^{-s-1}} \|f\|^{\gamma'}_{C^{N_0}} \right), 
\ee
where $\gamma=\demi(1-\theta_\alpha)(1+\nu)+(1-\theta_3)(1-\nu)>1$, $\gamma'>0$, $\theta_3=\frac{n/2+4+2s}{N_0+s+1}$, if $s>0$ is chosen small enough.
Assume $\delta$ is chosen small enough so that $C \delta^{\alpha /2} \leq 1/2$. Then:
\[ \begin{split} 
\|f\|^{\gamma}_{H^{-s-1}} \|f\|^{\gamma'}_{C^{N_0}} 
\leq C\|f\|_{H^{-s-1}}  \|f\|_{C^{N_0}}^{(\gamma-1)+\gamma'} 
\leq C\|f\|_{H^{-s-1}}\delta^{(\gamma-1)+\gamma'}\leq \demi \|f\|_{H^{-s-1}},
\end{split} \]
if $\delta>0$ is chosen small enough depending on $C=C(g_0,s,\alpha,\beta,\nu)$ and $N+\alpha>N_0$. The sought result then follows from the previous inequality combined with (\ref{eq:interp1}).

\subsection{Compactness theorems and proof of Theorem \ref{finiteness}}

We let $M$ be a compact smooth manifold equipped with an Anosov geodesic flow. By the proof of  
\cite[Theorem 4.8]{Kn}, the universal cover $\til{M}$ and the fundamental group $\pi_1(M)$ are hyperbolic in the sense of Gromov \cite{Gr}. We shall denote by $\mathcal{R}_g$ the curvature tensor associated to the metric $g$ and by ${\rm inj}(g)$ the injectivity radius of $g$. 
We proceed by contradiction: let $(g_n)_{n\geq 0}$ be a sequence of smooth metrics on $M$ in the class $\mc{A}(\nu_0,\nu_1,C_0,\theta_0)$ (defined in the Introduction) such that $L_{g_n}=L_{g_0}$ and such that for each $k\in\nn$
there is $B_k>0$ such that $|\nabla_{g_n}^k\mc{R}_{g_n}|_{g_n}\leq B_k$ for all $n$, and we assume that 
for each $n\not=n'$, $g_n$ is not isometric to $g_{n'}$.
Since the metrics have Anosov flow, they have no conjugate points and thus  
\[{\rm inj}(g_n)= \demi \min_{c\in \mc{C}} L_{g_n}(c)=\demi \min_{c\in \mc{C}} L_{g_0}(c).\]
By Hamilton compactness result \cite[Theorem 2.3]{Ham}, there is a family of smooth diffeomorphisms $\phi_n$ on $M$ 
such that $g_n':=\phi_n^*g_n$ converges to $g\in \mc{A}(\nu_0,\nu_1,C_0,\theta_0)$ in the $C^\infty$ topology (note that $\mc{A}(\nu_0,\nu_1,C_0,\theta_0)$ is invariant by pull-back through smooth diffeomorphisms). 
Denote by ${\phi_n}_*\in {\rm Out}(\pi_1(M))$ the action of $\phi_n$ on the set of conjugacy classes $\mc{C}$.
The universal cover $\til{M}$ of $M$ is a ball since $M$ has no conjugate points, and $\pi_1(M)$ is a hyperbolic group thus we can apply the result of Gromov \cite[Theorem 5.4.1]{Gr} saying that the outer automorphism group 
${\rm Out}(\pi_1(M))$ is finite if $\dim M\geq 3$. This implies in particular that there is 
a subsequence $(\phi_{n_j})_{j\in\nn}$ such that ${\phi_{n_j}}_*(c)={\phi_{n_0}}_*(c)$ 
for all $c\in \mc{C}$ and all $j\in\nn$ where as before $\mc{C}$ is the set of conjugacy classes of $\pi_1(M)$. But $\phi_{n_0}^*g_{n_j}$ have the same marked length spectrum as 
$\phi^*_{n_0}g_0$ for all $j$, thus $L_{g_{n_j}'}=L_{\phi^*_{n_0}g_0}$ for all $j$. 
Since $g'_{n_j}\to g$ in $C^\infty$, we have $L_{g}=L_{g_{n_j}'}$ for all $j$ and by Theorem \ref{th1}, we deduce that there is $j_0$ such that for all $j\geq j_0$, $g'_{n_j}$ is isometric to $g$. This gives a contradiction.
 
Now, if $\dim M=2$, ${\rm Out}(\pi_1(M))$ is a discrete infinite group. 
 We first show that for each $c\in \mc{C}$, the set of classes $(\phi_n^{-1})_*(c)\in \mc{C}$ 
 is finite as $n$ ranges over $\nn$. Assume the contrary, then consider $\gamma_n$ the geodesic for $g_n$ in 
the class $c$, one has $L_{g_n}(c)=\ell_{g_n}(\gamma_n)=\ell_{g_0}(\gamma_0)$, by assumption.
Now $\phi_n^{-1}(\gamma_n)$ is a $g_n'$ geodesic in the class $(\phi_n^{-1})_*(c)$ with length 
$\ell_{g_n'}(\phi_n^{-1}(\gamma_n))=\ell_{g_n}(\gamma_n)=\ell_{g_0}(\gamma_0)$. We know that there are finitely many $g$-geodesics with length less than $\ell_{g_0}(\gamma_0)$, but we also have  
\[ L_g((\phi_n^{-1})_*(c))\leq \ell_g(\phi_n^{-1}(\gamma_n))\leq \ell_{g_n'}(\phi_n^{-1}(\gamma_n))(1+\eps)\leq \ell_{g_0}(\gamma_0)(1+\eps),\] 
if $\|g_n'-g\|_{C^3}\leq \eps$. Thus we obtain a contradiction for $n$ large. The extended mapping class group\footnote{extended in the sense that it includes orientation reversing elements.} ${\rm Mod}(M)$ is isomorphic to
${\rm Out}(\pi_1(M))$ (see \cite[Theorem 8.1]{FaMa}). By \cite[Proposition 2.8]{FaMa}\footnote{see also the proof of Theorem 3.10 in \cite{FaMa}}, if $M$ has genus at least $3$, there is a finite set $\mc{C}_0\subset \mc{C}$ such that if $\phi_*\in {\rm Mod}(M)$  is the identity on $\mc{C}_0$ then  
$\phi$ is homotopic to ${\rm Id}$, while if $M$ has genus $2$, the same condition implies that $\phi$ is either homotopic to ${\rm Id}$ or to an hyperelliptic involution $h$. In both cases, we can extract a subsquence $\phi_{n_j}$ such that 
${\phi_{n_j}}_*={\phi_{n_0}}_*$ for all $j\geq 0$ and we conclude like in the higher dimensional case.

\nocite{*}
\bibliographystyle{cdraifplain}
\bibliography{xampl}

\begin{thebibliography}{9}

\bibitem[An]{An} D. V. Anosov, \emph{Geodesic flows on closed Riemannian manifolds with negative curvature}, 
Trudy Mat. Inst. Steklov. \textbf{90} (1967) 209 pp.


\bibitem[BCG]{BCG} G. Besson, G. Courtois, S. Gallot, \emph{Entropies et rigidit\'e des espaces
localement sym\'etriques de courbure strictement n\'egative}, Geometric And
Functional Analysis \textbf{5} (1995), 731--799.

\bibitem[Bi]{Bi} K. Biswas, \emph{Hyperbolic P-barycenters, circumcenters, and Moebius maps},
preprint arXiv 1711.02559.

\bibitem[BuKa]{bk} K. Burns, A. Katok, \emph{Manifolds with non-positive curvature}, Ergodic Theory and Dynamical Systems \textbf{5} (1985) no 2, 307--317.

\bibitem[BuLi]{BuLi} O. Butterley, C. Liverani, \emph{Smooth Anosov flows: correlation spectra and stability.} J. Mod. Dyn. \textbf{1} (2007) no 2, 301--322.

\bibitem[Cr1]{cr1} C. B. Croke, \emph{Rigidity for surfaces of nonpositive curvature}, Comment. Math. Helv. \textbf{65} (1990), no. 1, 150169.
	
\bibitem[Cr2]{Cr2} C.B. Croke, \emph{Rigidity theorems in Riemannian geometry}, Chapter in Geometric Methods in Inverse Problems and PDE Control, C. Croke, I. Lasiecka, G. Uhlmann, and M. Vogelius eds., Springer 2004.

\bibitem[CrDa]{CrDa} C.B. Croke, N. Dairbekov, \emph{Lengths and volumes in Riemannian manifolds}, Duke Math. J. \textbf{125} (2004), no. 1, 1--14.

\bibitem[CDS]{cds} C. B. Croke, N. S. Dairbekov, V. A. Sharafutdinov, \emph{Local boundary rigidity of a compact Riemannian manifold with curvature bounded above}, Trans. Amer. Math. Soc. \textbf{352} (2000), no. 9, 3937--3956.
	
\bibitem[CFF]{cff} C. B. Croke, A. Fathi, J. Feldman, \emph{The marked length spectrum of a surface of non-positive curvature}, Topology \textbf{31} (1992), 847--855.

\bibitem[CrSh]{cs} C. B. Croke, V. A. Sharafutdinov, \emph{Spectral rigidity of a compact negatively curved manifold}, Topology \textbf{37} (1998), no. 6, pp. 1265--1273.	

\bibitem[DaSh]{DaSh} N . Dairbekov, V. Sharafutdinov, \emph{Some problems of integral geometry on Anosov manifolds.}
Erg. Th. Dyn. Sys. \textbf{23} (2003), 59--74.

\bibitem[DGRS]{DGRS} N.V. Dang, C. Guillarmou, G. Rivi\`ere, S. Shen, \emph{Fried conjecture in small dimensions}, preprint.

\bibitem[DMM]{DMM} R. de la Llave, J. M. Marco, R. Moriyon, \emph{Canonical perturbation theory of Anosov systems and regularity results for the Livsic cohomology equation}, Annals of Mathematics \textbf{123} (1986) no. 3, 537--611
	
\bibitem[DyZw]{dz} S. Dyatlov, M. Zworski, \emph{Dynamical zeta functions for Anosov flows via microlocal analysis}, Annales de l'ENS, \textbf{49} (2016), 543--577.
	
\bibitem[DyZw2]{dz2} S. Dyatlov, M. Zworski, \emph{Mathematical theory of scattering resonances}, book in preparation, http://math.mit.edu/$\sim$dyatlov/res/res\_20180406.pdf
	

\bibitem[FaMa]{FaMa} B. Farb, D. Margalit, A primer on mapping class group, Princeton Mathematical Series 49, Princeton Univ. Press.


\bibitem[FaSj]{fs} F. Faure, J. Sj\"ostrand, \emph{Upper bound on the density of Ruelle resonances for Anosov flows}, Comm. Math. Phys. \textbf{308} (2011) 308--325.
	

\bibitem[Gr]{Gr} M. Gromov, \emph{Hyperbolic Groups}, in Essays in Group Theory, MSRI publication vol 8, pages 75-264.


\bibitem[Gu1]{gu} C. Guillarmou, \emph{Invariant distributions and X-ray transform for Anosov flows},
J. Differential Geom. \textbf{105} (2017), no 2, 177--208.
	
\bibitem[Gu2]{gu2} C. Guillarmou, \emph{ Lens rigidity for manifolds with hyperbolic trapped set},
J. Amer. Math. Soc. \textbf{30} (2017), 561--599.

\bibitem[GuKa]{GuKa} V. Guillemin, D. Kazhdan, \emph{Some inverse spectral results for negatively curved 2-manifolds}, Topology \textbf{19} (1980), 301--312.

\bibitem[Ham]{Ham} R. S. Hamilton, \emph{A compactness property for solutions of the Ricci flow}, Amer. Journ. of Math. \textbf{117} (1995) no 3, 545--572.

\bibitem[Ha]{Ha} U. Hamenst\"adt, \emph{Cocycles, symplectic structures and intersection}, Geom. Funct. Anal. \textbf{9} (1999) 90--140
	

\bibitem[Ka]{Ka} A. Katok, \emph{Four applications of conformal equivalence to geometry and dynamics}, Ergodic Theory and Dynamical Systems \textbf{8} (1988), 139--152.
         
         
\bibitem[Kl]{wk} W. Klingenberg, \emph{Riemannian manifolds with geodesic flow of Anosov type,}
		Annals of Mathematics, Second Series, \textbf{99} (1974), no. 1 pp. 1--13
		
\bibitem[Kl2]{Kl2} W. Klingenberg, \emph{Riemannian geometry}, De Gruyter 1982. 

\bibitem[Kn]{Kn} G. Knieper, \emph{New results on noncompact harmonic manifolds},
 Comment. Math. Helv. \textbf{87}, no. 3 (2012), 669-703.	
 
\bibitem[Le]{tl} T. Lefeuvre, \emph{Local marked boundary rigidity under hyperbolic trapping assumptions}, preprint, \url{https://arxiv.org/abs/1804.02143}.
		
%
\bibitem[Li]{anl} A. N. Livsic, \emph{Cohomology of dynamical systems,}
	Izv. Akad. Nauk. SSSR Ser. Mat. Tome \textbf{36} (1972).
	
\bibitem[LoTh]{lt} A. Lopes, Ph. Thieullen, \emph{Sub-actions for Anosov flows}, Ergodic Theory and Dynamical Systems \textbf{25} (2005) no 2, 605 -- 628.
         
\bibitem[Ot]{ot} J-P. Otal, \emph{Le spectre marqu\'e des longueurs des surfaces \`a courbure 
n\'egative}, Ann. of Math. (2) \textbf{131} (1990), no. 1, 151--162
         
\bibitem[Pa]{wp} W. Parry, \emph{Equilibrium states and weighted uniform distribution of closed orbits}, Dynamical Systems, Lecture Notes in Math., Vol. 1342, 626-637, Springer-Verlag, Berlin (1988).
	
         
\bibitem[PSU]{psu1} G. P. Paternain, M. Salo, G. Uhlmann, \emph{Spectral rigidity and invariant distributions on Anosov surfaces},  J. Diff. Geom. \textbf{98} (2014), no. 1, 147--181.
 
\bibitem[PSU2]{psu2} G. P. Paternain, M. Salo, G. Uhlmann, \emph{Invariant distributions, Beurling transforms and tensor tomography in higher dimensions.}
Math. Ann. \textbf{363} (2015), no. 1, 305--362

\bibitem[PeUh]{PeUh}  L. Pestov, G.Uhlmann, \emph{Two dimensional compact simple Riemannian manifolds are boundary distance rigid.}
 Ann. of Math. (2) \textbf{161} (2005), no. 2, 1093--1110.	

    
 \bibitem[PoSh]{ps} M. Pollicott, R. Sharp, \emph{Livsic theorems, maximizing measures and the stable norm}, Dynamical Systems, \textbf{19} (2004), no 1, 75--88,
         
 \bibitem[Sh]{sh} V. A. Sharafutdinov, Integral geometry of tensor fields, 1994.
          
\bibitem[Si]{Si} K. Sigmund, \emph{On the Space of Invariant Measures for Hyperbolic Flows}, Amer. Journ. Math. \textbf{94} (1972), no. 1, 31--37.
    
\bibitem[SUV]{SUV} P. Stefanov, G. Uhlmann, A. Vasy, \emph{Local and global boundary rigidity and the geodesic X-ray transform in the normal gauge}, arXiv:1702.03638.
    
\bibitem[Vi]{mfv} M-F. Vign\'eras, \emph{Vari\'et\'es Riemanniennes isospectrales non-isom\'etriques}, Annals of Mathematics, Second Series, \textbf{112} (1980), no. 1, 21--32.
	
\bibitem[Wi]{aw} A. Wilkinson, \emph{Lectures on marked length spectrum rigidity,}
		Lecture notes, \url{www.math.utah.edu/pcmi12/lecture_notes/wilkinson.pdf}

\end{thebibliography}


\end{document}